\newcommand{\RE}{\mathbb{R}}
\newcommand{\REp}{\mathbb{R}_{\geq0}}
\newcommand{\fin}{\Phi}
\newcommand{\run}{\Psi}
\newcommand{\hfin}{\hat{\fin}}
\newcommand{\hrun}{\hat{\run}}
\newcommand{\I}{Z}
\newcommand{\w}{r}
\newcommand{\calH}{\mathcal{H}}
\newcommand{\calO}{\mathcal{O}}
\def\R{{\mathbb R}}
\def\calH{{\mathcal H}}
\DeclareMathOperator{\diag}{diag}
\DeclareMathOperator{\rowspan}{rs}
\DeclareMathOperator{\supp}{supp}
\newcommand{\lb}{\underline{A}}
\newcommand{\ub}{\overline{A}}
\newcommand{\lbb}{\underline{R}}
\newcommand{\ubb}{\overline{R}}
\newcommand{\B}{R}
\newcommand{\rwlL}{W}
\newtheorem{example}{Example}
\newtheorem{lemma}{Lemma}
\newtheorem{proposition}{Proposition}
\newtheorem{theorem}{Theorem}
\newtheorem{remark}{Remark}
\newtheorem{definition}{Definition}
\begin{document}

\title{\LARGE \bf
Proper Lumping for Positive Bilinear Control Systems
}

\author{A. Jim\'{e}nez-Pastor$^{1}$, D. Toller$^{1}$, M. Tribastone$^{2}$, M. Tschaikowski$^{1}$ and A. Vandin$^{3,4}$ \vspace{1em}\\
\small{$^{1}$Aalborg University, Denmark,\ \ $^{2}$IMT Lucca, Italy,\ \ $^{3}$Sant'Anna Pisa, Italy,\ \ $^{4}$DTU Compute, Denmark}
}


\maketitle

\begin{abstract}
Positive systems 
naturally arise in situations where the model tracks physical quantities. Although the linear case is well understood, analysis and controller design for nonlinear positive systems remain challenging. Model reduction methods can help tame this problem. Here we propose a notion of model reduction for a class of positive bilinear systems with (bounded) matrix and exogenous controls. Our reduction, called proper positive lumping, aggregates the original system such that states of the corresponding reduced model represent non-negative linear combinations of original state variables. We prove a characterization result showing that the reductions by proper positive lumping are exactly those preserving the optimality of a suitable class of value functions. Moreover, we provide an efficient polynomial-time algorithm for the computation of the minimal lumping. We numerically evaluate our approach by applying it to a number of benchmark case studies.
\end{abstract}


\section{Introduction}

Positive systems find applications in many domains where models describe the evolution of variables that live in the non-negative orthant~\cite{farina2000positive,smith2008monotone}, e.g., to track physical quantities~\cite{shorten2006positive,haddad2010nonnegative,kaczorek2011positive}.
In this paper we are concerned with controlled positive systems in the form $\partial_t x(t) = A(t)x(t) + B u(t)$ with an output map $z(t) = O x(t)$, where $A(\cdot)$ and $u(\cdot)$ are controls subject to compact domains.
This forms a subset of well-known bilinear control systems~\cite{1000269} with exogenous controls. Although positive linear systems have received considerable attention in the literature (e.g.,~\cite{8618689} and references therein), fewer results are available for bilinear control systems. Despite some advances~\cite{AMATO20011459,4100876,WEISS2021109358}, problems like stability and controller design still remain challenging.


Model reduction methods can simplify those problems by projecting the original model onto a lower-dimensional one with theoretical guarantees of a relationship between the two. Model reduction has a long-standing tradition
across many branches of science (e.g.,~\cite{1098900,antoulas2005approximation,Snowden:2017aa}). Here we are concerned with a specific class of methods known as \emph{proper lumping} (e.g.,~\cite{okino1998}): the projection is performed by identifying a positive orthogonal projection of the original state variables, and the reduced model tracks the state evolution of the projection. This ensures physically intelligible reductions~\cite{PNAScttv,DBLP:conf/qest/CardelliTTV18,DBLP:journals/tcs/CardelliTTV19a} that preserve positiveness~\cite{Snowden:2017aa}.

The vast majority of lumping algorithms share two main features: i) they do not take controls into account; ii) they consider reductions induced by partitions, where each partition block represents the \emph{sum} of the original variables in that block~\cite{Derisavi2003309,dokoumetzidis:40,PNAScttv}. In this paper, we develop a lumping method for positive linear systems with matrix and vector controls where each block represents a \emph{non-negative linear combination} of variables; for these two reasons, this method is called \emph{proper positive lumping}. 

We define (constrained) proper lumping for a positive control system as a reduction that preserves a given output matrix $O$ (known as the constraint) and that is a proper lumping for the extremal matrices $\lb$ and $\ub$ describing the matrix domain $A(\cdot) \in [\lb;\ub]$. We show that the reduced model preserves optimality with respect to the original one under a suitable class of costs. In particular, we provide a result that allows the reconstruction of the original optimal controls from the reduced ones, thus circumventing the original optimal control problem altogether. Importantly, we also give a characterization result: we prove that a reduction is optimality-preserving for the control system if and only if it is a proper lumping of its extremal matrices $\lb$ and $\ub$.

Hence, applying out proper lumping technique, we can compute the minimal control system necessary to obtain the output map $z(t)$ and perform optimal computations on the reduced system to, then, reconstruct the optimal controls on the original system. 

Another important contribution of this paper is of algorithmic nature. We show that the minimal proper lumping, i.e., the one projecting onto the smallest space, can be computed in polynomial time with respect to the dimensionality of the model. The algorithm computes a common minimal proper lumping of the extremal matrices $\lb$ and $\ub$ by building on \emph{CLUE}~\cite{10.1093/bioinformatics/btab258}, a recent method for lumping ordinary differential equations with polynomial right-hand sides (without controls).

Using CLUE, we extensively test proper lumping on over 1000 models of dynamics on networks taken from a public repository. We show that the notion can be useful in practice as it can reduce at least half of the examined models, with at least a 65\% reduction ratio for over 30\% of the cases.

\emph{Related work.} Our approach generalizes a recent work that considers uncertain continuous-time Markov chains~\cite{uctmcTAC}, which are a special case of positive linear systems with interval uncertainties. In addition, the lumping algorithm in~\cite{uctmcTAC} only considers blocks that represent sums of variables, unlike our more general case of non-negative linear combinations.

In the field of model reduction, the notions most close to us are those of constrained lumping~\cite{10.1093/bioinformatics/btab258,LI1994343}
and consistent abstraction, also known as bisimulation~\cite{DBLP:journals/tac/PappasLS00,bisimulation_lin_sys_Schaft,DBLP:journals/tac/PappasS02,DBLP:journals/automatica/TabuadaP03}.
Our notion complements those approaches. Indeed, while lumping has been considered for uncontrolled dynamical systems only, the computational aspects of bisimulation have been investigated in full only in the case of additive controls~\cite{DBLP:journals/tac/PappasLS00}. The nonlinear counterparts~\cite{DBLP:journals/tac/PappasS02,bisimulation_lin_sys_Schaft}, instead, provide a mathematical characterization of the bisimulation quotient (via tangential spaces and manifold intersections) but leave the computational aspects unaddressed. Less closely related are~\cite{antoulas2005approximation,MehrmannStykelMOR,10.1007/978-3-642-02894-6_13,LI20111504} or~\cite{Ishizaki2015,Sandberg2008} as these either do not allow for matrix controls or study network properties.

\section{Preliminaries}

\paragraph*{Notation} We denote by $A$ a real $n \times n$ matrix; instead, $L$ usually denotes a $k \times n$ matrix with full row rank, whereas $L^+ = L^T (L L^T)^{-1}$ denotes its generalized right inverse, satisfying $L L^+ = I_k$ (the $k\times k$ identity matrix). 
The row space of $L$ is denoted by $\rowspan(L)$, 
while the support of a vector $v$ is $\supp (v) = \{ i \mid v_i \neq 0 \}$. A matrix function $A = A(\cdot)$ is called admissible if it is piecewise continuous with right limits and it satisfies  $A(\cdot) \in [\lb ; \ub]$ with respect to matrix bounds $\lb \leq \ub$, where $\leq$ is meant component-wise. The technical assumption on right limits is necessary to establish later our characterization results. We often denote by $x^{(A,u)}$ the solution of $\partial_t x(t) = A(t) \cdot x(t) + B \cdot u(t)$.

We begin by reviewing  constrained lumping ~\cite{10.1093/bioinformatics/btab258,LI1994343}.

\begin{definition}[Constrained Lumping]\label{new:def:lumping}
Fix a linear system with state matrix $A \in \RE^{n \times n}$ and output matrix $O \in \RE^{l \times n}$: 
\begin{align}\label{linear:odes}
\partial_t x(t) & = A \cdot x(t), &
		z(t) & = O \cdot x(t).
\end{align}

A matrix $L \in \RE^{k \times n}$ with rank $k \leq n$ is called a constrained lumping of \eqref{linear:odes} if $\rowspan(O) \subseteq \rowspan(L)$ and $\rowspan(LA) \subseteq \rowspan(L)$.
%
\end{definition}
In the above setting, we also say that $L$ is a (constrained) lumping of $A$ with respect to $O$.

\begin{example}\label{running:example}
Consider the linear system
\begin{align*}
\partial_t x(t) & = A \cdot x(t), & z(t) & = O \cdot x(t),
\end{align*}
where
$
A =
\begin{pmatrix}
0 & 2 & 0\\
1 & 1 & 1\\
0 & 0 & 1
\end{pmatrix},
$ and 
$O =
\begin{pmatrix}
0 & 0 & 1
\end{pmatrix}.$
%
%
Then the matrix $L = \begin{pmatrix}
1 & 2 & 0\\
0 & 0 & 1
\end{pmatrix}$
is a constrained lumping.
\end{example}

\begin{definition}[Aggregation and Reduced System]
Fix a matrix $L \in \RE^{k \times n}$ with rank $k \leq n$.
\begin{itemize}
\item The function $\R^n \to \R^k$, mapping $x \mapsto Lx$, induced by $L$ is called the aggregation (map), and $y=Lx$ are called the aggregated variables.
\item When $\rowspan(O) \subseteq \rowspan(L)$, setting $\B = L A L^+$, we call 
\begin{align}\label{reduced:linear:odes}
\partial_t y(t) & = \B \cdot y(t) , & z(t) & = O L^+ \cdot y(t)
\end{align}
the reduced system of~\eqref{linear:odes}. 
\end{itemize}
\end{definition}

The condition of lumping was recently proved~\cite{10.1093/bioinformatics/btab258} to be equivalent to the following property, motivating our interest in them and the definition of reduced system.
\begin{proposition}
\label{preservation:traj:0}
If $L \in \RE^{k \times n}$ is of rank $k$ with $\rowspan(O) \subseteq \rowspan(L)$, then $L$ is a lumping of~\eqref{linear:odes} if and only if, for any solution $x$ of~\eqref{linear:odes}, $y = L x$ is a solution of~\eqref{reduced:linear:odes}.
\end{proposition}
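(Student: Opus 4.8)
The plan is to reduce the whole statement to the single matrix identity $LA = \B L$, where $\B = L A L^+$, and to translate the row-space inclusion appearing in Definition~\ref{new:def:lumping} into this identity by means of the projection $L^+ L$. First I would record the elementary properties of the generalized inverse: since $L$ has full row rank, $L L^+ = I_k$, and $P := L^+ L$ is a symmetric idempotent matrix. The crucial observation is that $P$ is the orthogonal projector onto $\rowspan(L)$, in the sense that a row vector $v$ lies in $\rowspan(L)$ if and only if $vP = v$; applying this row by row, for any matrix $M$ with $n$ columns one has $\rowspan(M) \subseteq \rowspan(L)$ if and only if $M L^+ L = M$. Two consequences follow immediately: from the standing hypothesis $\rowspan(O) \subseteq \rowspan(L)$ we get $O L^+ L = O$, and the remaining lumping condition $\rowspan(LA) \subseteq \rowspan(L)$ is equivalent to $L A L^+ L = LA$, i.e.\ to $\B L = LA$.

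For the forward implication I assume $L$ is a lumping, so that $LA = \B L$. Given any solution $x$ of~\eqref{linear:odes}, set $y = Lx$; differentiating and using the identity gives $\partial_t y = L \partial_t x = LAx = \B L x = \B y$, which is the state equation of~\eqref{reduced:linear:odes}. For the output I use $O L^+ L = O$ to obtain $O L^+ y = O L^+ L x = O x = z$. Hence $y = Lx$ solves the reduced system.

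For the converse I assume that $y = Lx$ solves~\eqref{reduced:linear:odes} for every solution $x$ of~\eqref{linear:odes}. Differentiating $y = Lx$ always yields $\partial_t y = LAx$, while the reduced state equation forces $\partial_t y = \B y = \B L x$; equating the two gives $LAx(t) = \B L x(t)$ along every trajectory. Evaluating at the initial time and exploiting the fact that for a linear autonomous system the initial value $x(0)$ ranges over all of $\RE^n$, I conclude $LA v = \B L v$ for every $v$, i.e.\ $LA = \B L$. Together with the assumed $\rowspan(O) \subseteq \rowspan(L)$ this is exactly the lumping condition. The only genuinely delicate point is this last step: one must justify that the family of admissible solutions is rich enough — that every vector arises as some $x(0)$ — so that a pointwise identity on trajectories upgrades to the matrix identity $LA = \B L$. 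All remaining manipulations are routine linear algebra built on the projection $L^+ L$.
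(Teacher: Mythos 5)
Your proof is correct. The paper itself states this proposition without proof, citing~\cite{10.1093/bioinformatics/btab258}, and your argument is exactly the standard one used there: translating the lumping condition $\rowspan(LA)\subseteq\rowspan(L)$ into the matrix identity $LA=\B L$ via the projector $L^+L$, and, for the converse, upgrading the trajectory-wise identity $LAx(t)=\B Lx(t)$ to a matrix identity by noting that every $v\in\RE^n$ occurs as an initial condition $x(0)$ of a solution $x(t)=e^{tA}v$ --- the one delicate step, which you correctly identify and justify.
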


\begin{remark}
The choice of the output matrix for the reduced system in \eqref{reduced:linear:odes} is motivated by the fact that $\rowspan(O) \subseteq \rowspan(L)$ can be shown to be equivalent to
$O = O L^+ L$, and then 
$$z = O x = O L^+ L x = O L^+ y,$$ so the original system and the reduced one have the same output values $z$.
\end{remark}

\begin{example}\label{running:example:reduced}
Continuing Example \ref{running:example}, one has 
\[
L^+ = \begin{pmatrix}
\frac{1}{5} & 0\\
\frac{2}{5} & 0\\
0 & 1
\end{pmatrix},
\
L A L^+ = \begin{pmatrix}
2 & 2\\
0 & 1
\end{pmatrix},
\text{ and }
O L^+ = \begin{pmatrix}
0 & 1
\end{pmatrix}.
\]
The aggregated variables are $\begin{pmatrix}
y_1 \\
y_2
\end{pmatrix} =
\begin{pmatrix}
x_1 + 2 x_2 \\
x_3
\end{pmatrix}$ and the reduced system is
\begin{align}\label{ex:reduced}
\begin{pmatrix}
\partial_t y_1\\
\partial_t y_2
\end{pmatrix}
& =
\begin{pmatrix}
2 & 2\\
0 & 1
\end{pmatrix}
\begin{pmatrix}
y_1\\
y_2
\end{pmatrix},
&
z = \begin{pmatrix}
0 & 1
\end{pmatrix}\begin{pmatrix}
y_1\\
y_2
\end{pmatrix}.
\end{align}
\end{example}

Given an output $O$, a constrained lumping $L \in \RE^{k \times n}$ with minimal row space $\rowspan(L)$ exists and can be computed in polynomial time~\cite{10.1093/bioinformatics/btab258}. We will make use of this result in Section~\ref{section:evaluation}.

The matrix $L$ in Example \ref{running:example} has non-negative entries, and its rows have pairwise disjoint supports. Matrices with such properties have been studied in the literature~\cite{okino1998}; they are fundamental to our work, so we introduce them in the following definition.
\begin{definition}[Proper Matrix and Proper Lumping]
We say a matrix $L \in \RE_{\geq 0}^{k \times n}$ is \emph{proper} if it has rank $k \leq n$, and its rows have pairwise disjoint supports.

If $L$ is proper, and it is a constrained lumping of \eqref{linear:odes}, we call $L$ a \emph{constrained proper lumping} of \eqref{linear:odes} (or simply \emph{proper lumping} when this creates no ambiguity). 
\end{definition}

For a matrix $L \in \RE^{k \times n}$ with rank $k \leq n$, the aggregation map $\R^n \to \R^k$ is surjective, as $L$ has full row rank by definition, and transforms the $n$-dimensional problem \eqref{linear:odes} to the $k$-dimensional one \eqref{reduced:linear:odes}. If $L$ has any zero columns, then $L$ essentially deletes the corresponding state variables, as they do not appear in the aggregation $L x$; obviously $L$ induces an aggregation of the remaining variables. Each of the rows of $L$ represents a state variable for the reduced system, obtained as the linear combination of the original state variables using that row of coefficients.
The additional algebraic property of $L$ to be proper translates to the fact that such linear combinations are taken over disjoint subsets of variables, so each original variable influences at most one variable of the reduced system.

\section{Lumping Positive Control Systems}

We begin by introducing the notion of 
Positive Controlled Systems (PCS).
Following the literature~\cite{WEISS2021109358}, the system \eqref{linear:odes} is called positive if it satisfies the following: for every non-negative initial condition $x[0] \in \REp^n$, the solution $x(\cdot)$ of \eqref{linear:odes} remains non-negative, i.e. $x(t) \in \REp^n$ for every $t\geq 0$. It is well known that this holds if and only if $A_{i,j} \geq 0$ for every $1 \leq i \neq j \leq n$, and a matrix with this property is called Metzler matrix. Adhering to this setting, we thus consider in the following Metzler matrices, non-negative input matrices $B$, and non-negative control inputs $u$.

\begin{definition}[Positive Controlled System, PCS]\label{definition:uncertain:linear:system}
A PCS, denoted by $([\lb;\ub],O,B,U)$,  is given by
\begin{align}\label{ULODE:uncertain:linear:odes:0}
\partial_t x(t) & = A(t) \cdot x(t) + B \cdot u(t), &
z(t) & = O \cdot x(t)
\end{align}
where $O \in \RE^{l \times n}$ is the output matrix, $B \in \RE^{n \times m}_{\geq0}$, and
\begin{itemize}
    \item $\lb, \ub \in \RE^{n \times n}$ are Metzler matrices with $\lb \leq \ub$;
    \item $U \subseteq \RE_{\geq0}^m$ is  compact and contains the origin;
    \item $A(\cdot) \in [\lb;\ub]$ and $u(\cdot) \in U$ are admissible.
\end{itemize}
We denote by $x[0] \in \REp^n$ the initial condition of the PCS.
\end{definition}

As $\lb$ is Metzler, all admissible matrices $A(\cdot) \geq \lb$ are Metzler too, and so given $x[0] \in \REp^n$ all solutions to \eqref{ULODE:uncertain:linear:odes:0} remain in $\REp^n$.
We remark that PCSs constitute a family of bilinear control systems~\cite{1000269} with exogenous linear controls, i.e., both the matrix $A(t)$ and $u(t)$ are control variables. Being interested in the optimal control of a PCS, we next introduce the value function associated to it~\cite{Liberzon}. 

\begin{definition}[Value Function]\label{def:values}
For time $\tau \in \R_{\geq0}$ and vector $x[0] \in \REp^n$ describing the initial condition of~\eqref{ULODE:uncertain:linear:odes:0}, we define
\begin{itemize}
    \item for any $A(\cdot) \in [\lb ; \ub]$ and $u(\cdot) \in U$, the cost as
    \begin{multline*}
    J_{(A,u)}(x[0],\tau) = \fin(x^{(A,u)}(\tau),u(\tau)) \\
    + \int_0^{\tau} \run(t,x^{(A,u)}(t),u(t)) dt,
    \end{multline*}
    where $x^{(A,u)}$ is the solution of~\eqref{ULODE:uncertain:linear:odes:0} with given $A$ and $u$, the continuous function $\run: \R \times \R^n \times \R^m \to \R$ is called running cost, and the continuous function $\fin: \R^n \times \R^m \to \R$ is called final cost.    
    \item the optimal values as
    \begin{gather*}
    \hspace{-5pt}
    V^{\inf} (x[0], \tau) = \inf \{ J_{(A,u)}( x[0], \tau ) \mid A \in [\lb;\ub], u \in U \},\\
    \hspace{-10pt}
    V^{\sup} (x[0], \tau) = \sup \{ J_{(A,u)}( x[0], \tau ) \mid A \in [\lb;\ub], u \in U \}.
    \end{gather*}
\end{itemize}
\end{definition}

The compactness of $[\lb;\ub]$ and $U$ ensure the well-definedness of the value functions. While we consider costs $\Psi, \Phi$ that do not depend directly on $A$, we argue that this is a mild restriction because the control set $[\lb;\ub]$ is bounded. We note in particular that we can encode reachability, bounded affine controls $(A,u)$ and costs quadratic in state $x$ and control $u$, as studied in the linear quadratic regulator~\cite{Liberzon}.

We next introduce the main concept of the present work, by lifting the classical notion of constrained lumping to a PCS. Due to the non-negativity of the solutions of a PCS, we consider aggregation maps given by non-negative matrices.

\begin{definition}[Constrained Proper Positive Lumping]\label{def:lumping+}
Fix a PCS $([\lb;\ub],O,B,U)$.
\begin{itemize}
    \item We call $L$ a \emph{constrained proper positive lumping of the PCS} 
    if $L \in \RE_{\geq 0}^{k \times n}$ is proper and is a constrained lumping of $\lb$, $\ub$ with respect to $O$.
    \item The reduced PCS is $([ L \lb L^+ ; L \ub L^+ ],OL^+,LB,U)$, with initial condition
   $y[0] = L x[0]\in \R_{\geq 0}^{k}$.
\end{itemize}
\end{definition}

Often, we simply call proper lumping a matrix $L$ as above.

\begin{example}\label{example:uncertain:case}
Consider the PCS that arises by perturbing $A$
from Example~\ref{running:example} by a factor of $10 \% $, yielding $\lb = 0.9 A$ and $\ub = 1.1 A$.
Let $O = \begin{pmatrix}
0 & 0 & 1
\end{pmatrix}$, and consider the PCS $([\lb ; \ub], O, 0, \{0\} )$. 
Since $L$ from Example~\ref{running:example}
is a constrained lumping for both $\lb$ and $\ub$, it is constrained proper lumping for the above PCS. The reduced PCS is given by
\begin{align*}
L \lb L^+ & = 0.9 \begin{pmatrix}
2 & 2 \\
0 & 1
\end{pmatrix} &
L \ub L^+ & = 1.1 \begin{pmatrix}
2 & 2 \\
0 & 1
\end{pmatrix} \\
O L^+ & = \begin{pmatrix}
0 & 1
\end{pmatrix} &   LB & = 0.
\end{align*}
\end{example}

Next we introduce the natural candidates for cost functions and optimal values of the reduced PCS. To this end, note that given $L \in \RE^{k \times n}$ one has $L^+ \in \RE^{n \times k}$, so its associated linear map is $L^+: \RE^k \to \RE^n$.

\begin{definition}[Reduced optimal values and cost preservation]
Fix a PCS and a matrix $L \in \RE^{k \times n}$ of rank $k \leq n$.
\begin{itemize}
\item
Final and running costs $\fin$, $\run$ are called \emph{$L$-invariant} if $\fin (x,u) = \fin (x',u)$ and $\run (t,x,u) = \run (t,x',u)$ whenever $x, x' \in \R ^n$ satisfy $L x = L x'$. In this case, we consider the \emph{reduced final and running costs} $\hfin : \R^k \times \R^m \to \R$ and $\hrun : \R \times \R^k \times \R^m \to \R$ defined by
\begin{align*}
\hfin(y,u) = \fin (L^+ y,u) \text{ \ and \ }
\hrun(t,y,u) = \run(t,L^+y,u).
\end{align*}
Using them, the values for the reduced PCS 
are as in Definition~\ref{def:values}, and we denote them by $\hat{V}^{\inf}$ and $\hat{V}^{\sup}$.
\item
We say that $L$ \emph{preserves the costs} 
if for any $x[0] \in \REp^n$, it holds that $y[0] = L x[0]$ satisfies
    \begin{align*}
    V^{\inf} (x[0], \tau) & = \hat{V}^{\inf} (y[0], \tau),\\
    V^{\sup} (x[0], \tau) & = \hat{V}^{\sup} (y[0], \tau)
    \end{align*}
for every $\tau \in \REp$ and for every final cost $\fin$ and running cost $\run$ that are $L$-invariant.
\end{itemize}
\end{definition}
We remark that the above definitions are well-posed as the matrix $L$ has full row rank, and both $\hfin$ and $\hrun$ are continuous. 


\section{Main results}

We show first that a constrained proper lumping of a PCS preserves the family of its solutions. As a consequence, it also preserves its costs.

\begin{theorem}[Cost Preservation]\label{thm:coincidence:trajectories}
Assume that $L \in \REp^{k \times n}$ is a constrained proper lumping of the PCS $([\lb;\ub],O,B,U)$. Then for any initial condition $x[0] \in \REp^n$, and any $u(\cdot) \in U$:
\begin{itemize}
 \item for any $A(\cdot) \in [\lb;\ub]$, 
 there is $\B(\cdot) \in [ L \lb L^+; L \ub L^+ ]$ such that $L x^{(A,u)} = y^{(\B,u)}$;
 \item for any $\B(\cdot) \in [ L \lb L^+; L \ub L^+ ]$, 
 there is $A(\cdot) \in [\lb;\ub]$ such that $L x^{(A,u)} = y^{(\B,u)}$.
 \end{itemize}
In particular, $L$ preserves the costs of the PCS.
\end{theorem}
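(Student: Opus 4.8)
The plan is to reduce everything to a trajectory correspondence and then read off cost preservation. The starting point is the algebraic identity furnished by constrained lumping: since $L$ is a constrained lumping of both extremal matrices, $\rowspan(L\lb)\subseteq\rowspan(L)$ and $\rowspan(L\ub)\subseteq\rowspan(L)$, so $L\lb=M\lb L$ and $L\ub=M\ub L$ for some matrices; right-multiplying by $L^+$ and using $LL^+=I_k$ identifies the factors and yields the exact identities $L\lb=\lbr L$ and $L\ub=\ubr L$, where I write $\lbr:=L\lb L^+$ and $\ubr:=L\ub L^+$. Throughout I use that the PCS is positive, so $x^{(A,u)}(t)\geq 0$, and hence, since $L\geq 0$, that $y(t):=Lx^{(A,u)}(t)\geq 0$.

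For the first bullet, fix $A(\cdot)\in[\lb;\ub]$ and set $y=Lx^{(A,u)}$. Applying $L$ to the state equation gives $\partial_t y = L A(t)x^{(A,u)} + LB\,u$, so it suffices to produce an admissible $\B(\cdot)\in[\lbr;\ubr]$ with $\B(t)y(t)=LA(t)x^{(A,u)}(t)$; then $y$ solves the reduced equation with this $\B$ and initial value $Lx[0]=y[0]$, and uniqueness of solutions of the (linear, piecewise-continuous) reduced ODE forces $y=y^{(\B,u)}$. To build $\B$, note that $\lb\leq A(t)\leq\ub$ together with $x^{(A,u)}(t)\geq 0$ and $L\geq 0$ yields the componentwise sandwich $\lbr y(t)=L\lb x^{(A,u)}\leq LA(t)x^{(A,u)}\leq L\ub x^{(A,u)}=\ubr y(t)$, using the identities above. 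Hence $w(t):=LA(t)x^{(A,u)}(t)$ lies, componentwise, between $\lbr y(t)$ and $\ubr y(t)$, so for each row $i$ I can choose an interpolation weight $\lambda_i(t)\in[0,1]$ and set the $i$-th row of $\B(t)$ to $(1-\lambda_i(t))\lbr_{i\cdot}+\lambda_i(t)\ubr_{i\cdot}$, making $(\B(t)y(t))_i=w_i(t)$; this is solvable precisely because $w_i(t)\in[(\lbr y)_i,(\ubr y)_i]$, and it keeps $\B(t)\in[\lbr;\ubr]$.

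For the second bullet, fix $\B(\cdot)\in[\lbr;\ubr]$ and write $D(t):=\B(t)-\lbr$, so $0\leq D(t)\leq\ubr-\lbr=:\hat{G}$, while $G:=\ub-\lb\geq 0$ satisfies $LG=\hat{G} L$. Here I exploit the proper (disjoint-support) structure of $L$ to construct a genuine matrix identity $LA(t)=\B(t)L$: writing $S_1,\dots,S_k$ for the supports of the rows of $L$, I define $A(t):=\lb+E(t)$ by $E_{ps}(t):=\frac{D_{iq}(t)}{\hat{G}_{iq}}G_{ps}$ whenever $p\in S_i$, $s\in S_q$ and $\hat{G}_{iq}\neq 0$, and $E_{ps}(t):=0$ otherwise. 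Disjointness of the supports makes this well defined, $0\leq D_{iq}\leq\hat{G}_{iq}$ gives $0\leq E(t)\leq G$ so $A(t)\in[\lb;\ub]$, and a short computation using $LG=\hat{G} L$ column by column verifies $LE(t)=D(t)L$, whence $LA(t)=\lbr L+D(t)L=\B(t)L$. Applying $L$ to the state equation then gives $\partial_t y=\B(t)y+LB\,u$ with $y=Lx^{(A,u)}$ and $y(0)=y[0]$, and uniqueness again yields $y=y^{(\B,u)}$. This direction is clean because $E$ is a fixed linear function of $\B$, so $A(\cdot)$ inherits admissibility directly.

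Finally, cost preservation follows formally. For $L$-invariant $\fin,\run$ one has $\fin(x,u)=\fin(L^+Lx,u)=\hfin(Lx,u)$, and likewise for $\run$, since $L(L^+Lx)=Lx$; therefore $J_{(A,u)}(x[0],\tau)$ depends on $x^{(A,u)}$ only through $Lx^{(A,u)}$ and equals the reduced cost $\hat{J}_{(\B,u)}(y[0],\tau)$ whenever $Lx^{(A,u)}=y^{(\B,u)}$. The two bullets show that, for each fixed $u$, the families $\{J_{(A,u)}(x[0],\tau)\mid A\in[\lb;\ub]\}$ and $\{\hat{J}_{(\B,u)}(y[0],\tau)\mid \B\in[\lbr;\ubr]\}$ coincide, so taking infima and suprema over $A$ (resp.\ $\B$) and $u$ gives $\hat{V}^{\inf}=V^{\inf}$ and $\hat{V}^{\sup}=V^{\sup}$. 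I expect the only real obstacle to be the regularity of the $\B$ built in the first bullet: the weight $\lambda_i(t)=\frac{w_i(t)-(\lbr y)_i(t)}{((\ubr-\lbr)y)_i(t)}$ can jump where its denominator vanishes, and showing that $\B(\cdot)$ nonetheless stays piecewise continuous with right limits is exactly the point where the paper's technical assumption on right limits must be invoked.
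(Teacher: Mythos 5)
Your route is genuinely different from the paper's. The paper factors $L = CD$ (Proposition~3), conjugates the dynamics by $D$ (Lemma~2), and reduces everything to the $\{0,1\}$-lumping case of Proposition~4, whose proof is in turn inherited from the UCTMC reference; you instead work directly with the intertwining identities $L\lb = (L\lb L^+)L$ and $L\ub = (L\ub L^+)L$. Your second bullet is complete and correct: $LE(t) = D(t)L$ does follow from the disjointness of the row supports together with $L(\ub-\lb) = (\ubr - \lbr)L$, the resulting $A(\cdot)$ is admissible because it is a fixed affine function of $\B(t)$, and your formula is precisely the general-$L$ analogue of the paper's reconstruction formula in Proposition~5. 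The derivation of cost preservation from the two bullets is also fine.

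The first bullet, however, contains a genuine gap, which you flag yourself but do not close: the theorem needs the constructed $\B(\cdot)$ to be \emph{admissible} (piecewise continuous with right limits), for otherwise it is not a competitor in $\hat{V}^{\inf}$, $\hat{V}^{\sup}$ and the ``in particular'' claim collapses; and your row-interpolation selection does not obviously -- or even actually -- have this property. Two remarks. First, positivity gives you more structure than you use: every matrix in $[\lb;\ub]$ is Metzler and $Bu \geq 0$, so each component $x_s$, once positive, stays positive; hence the zero set of each denominator $((\ubr - \lbr)y)_i = (L(\ub - \lb)x)_i$ is an initial interval $[0,\sigma_i]$, and your $\B$ has at most one problematic junction point per row. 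Second, even with this, the right limit of $\lambda_i$ at $\sigma_i$ can fail to exist for your specific choice: when the relevant $x_s$ vanish at $\sigma_i$ together with their right derivatives, $\lambda_i(t)$ is a weighted ratio $\frac{\sum_s \alpha_s(t)\, x_s(t)}{\sum_s \beta_s\, x_s(t)}$ in which the proportions $x_s(t)/x_{s'}(t)$ may oscillate without a limit even for continuous admissible $A$ -- for instance, feeding $x_s$ and $x_{s'}$ through entries such as $(t-\sigma_i)\bigl(1+\tfrac{1}{2}\sin\log(t-\sigma_i)\bigr)$, which are continuous with right limit $0$, makes $x_s/x_{s'}$ oscillate between distinct bounds, and then $\lambda_i$ oscillates whenever the limits $\alpha_s(\sigma_i^+)$ differ. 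So admissibility of $\B$ is not a routine afterthought; it is the technical core of this direction of the theorem, and it is exactly the part the paper outsources to Proposition~4 and the UCTMC argument it cites. Repairing your proof requires either a smarter selection of $\B$ near each junction (the pointwise constraint $\B(t)y(t) = LA(t)x^{(A,u)}(t)$ leaves a nontrivial set of solutions to choose from) or the block-averaging construction and the associated limit analysis of the cited reference.
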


In the next result we give the converse to Theorem \ref{thm:coincidence:trajectories}, thus providing a complete characterization of a constrained proper lumping in terms of preservation of costs.

\begin{theorem}[Cost Characterization]\label{new:thm:coincidence:cost:functionals}\label{thm:coincidence:cost:functionals}
Let $L \in \R_{\geq 0}^{k \times n}$ be a proper matrix. If $\rowspan(O) \subseteq \rowspan(L)$ and $L$ preserves the costs of a PCS $([\lb;\ub],O,B,U)$, then $L$ is a constrained proper lumping of the PCS.
\end{theorem}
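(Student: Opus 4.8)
The plan is to prove the statement directly, by probing the cost-preservation hypothesis with a well-chosen family of costs that linearize the problem. Since $\rowspan(O)\subseteq\rowspan(L)$ and the properness of $L$ are assumed, it suffices to establish the two lumping conditions $\rowspan(L\lb)\subseteq\rowspan(L)$ and $\rowspan(L\ub)\subseteq\rowspan(L)$; equivalently, by the identity $M=ML^+L \Leftrightarrow \rowspan(M)\subseteq\rowspan(L)$ recalled in the Remark, it suffices to prove the matrix identities $L\ub=(L\ub L^+)L$ and $L\lb=(L\lb L^+)L$.

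First I would instantiate cost preservation with zero running cost $\run\equiv 0$ and a linear final cost $\fin(x,u)=c^\top L x$ for an arbitrary direction $c\in\R^k$. This $\fin$ is $L$-invariant, and its reduction is $\hfin(y,u)=c^\top y$ (using $LL^+=I_k$). Then $V^{\sup}(x[0],\tau)=\sup_{A,u}c^\top L x^{(A,u)}(\tau)$ is the support value of the aggregated reachable set in direction $c$, and $\hat V^{\sup}(Lx[0],\tau)$ is that of the reduced reachable set; cost preservation forces them equal for every $\tau\in\REp$.

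Second, and this is the technical heart, I would differentiate both sides at $\tau=0^+$. Because constant matrix and vector controls are admissible (this is precisely where the piecewise-continuity-with-right-limits assumption enters) and $[\lb;\ub]$, $U$ are compact, a Danskin-type argument yields that the right derivative of the value function at $\tau=0$ equals the support function of the instantaneous velocity set:
\[
\lim_{\tau\to 0^+}\frac{V^{\sup}(x[0],\tau)-c^\top Lx[0]}{\tau}=\sup_{A\in[\lb;\ub],\,u\in U}c^\top L\,(A\,x[0]+B\,u),
\]
and analogously for the reduced system with the term $c^\top L A\,x[0]$ replaced by $c^\top R\,Lx[0]$ for $R\in[L\lb L^+;L\ub L^+]$. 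Since the $u$-dependent contribution $\sup_{u\in U}c^\top LB\,u$ is identical on both sides and finite, it cancels, leaving for every $c\in\R^k$ and every $x[0]\in\REp^n$ the identity $\sup_{A\in[\lb;\ub]}c^\top L A\,x[0]=\sup_{R\in[L\lb L^+;L\ub L^+]}c^\top R\,L x[0]$.

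Third, I would evaluate this identity at $x[0]=e_j$ and exploit properness: because the rows of $L$ have pairwise disjoint supports, each column of $L$ has at most one nonzero (positive) entry, which decouples the component-wise box-maximization over $A$. A short computation then rewrites the left-hand side as $\sum_p\bigl([c_p]^+(L\ub)_{pj}-[c_p]^-(L\lb)_{pj}\bigr)$, while the right-hand side (whose box-maximization over $R$ uses $Lx[0]\ge 0$) equals $\sum_p\bigl([c_p]^+((L\ub L^+)L)_{pj}-[c_p]^-((L\lb L^+)L)_{pj}\bigr)$. Choosing $c=e_p$ and $c=-e_p$ reads off, entrywise and for every $j$, the identities $L\ub=(L\ub L^+)L$ and $L\lb=(L\lb L^+)L$, which are exactly the sought lumping conditions; together with the assumed properness and $\rowspan(O)\subseteq\rowspan(L)$, this makes $L$ a constrained proper lumping of the PCS. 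The main obstacle I anticipate is the rigorous justification of the second step, namely that the one-sided $\tau$-derivative of the value function is the support function of the (time-varying, set-valued) velocity set; I expect to settle it via compactness, continuous dependence of solutions on controls, and the admissibility of constant controls, noting that working with support functions (a first-order, linear probe) sidesteps any convexification subtlety, since support functions are insensitive to passing to closed convex hulls of the velocity sets.
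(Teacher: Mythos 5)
Your proof is correct, but it takes a genuinely different route from the paper's. The paper proves this theorem by reduction: it factors $L = CD$ with $C \in \{0,1\}^{k\times n}$ proper and $D$ positive diagonal (Proposition~\ref{lumping+:as:cdp}), transfers the cost-preservation hypothesis through the change of variables $z = Dx$ to the conjugated PCS $([D\lb D^{-1}; D\ub D^{-1}], OD^{-1}, DB, U)$, and then invokes the implication $2)\Rightarrow 1)$ of Proposition~\ref{generalized:UCRN}, whose proof probes with block-sum final costs $\Phi(x,u) = \sum_{j \in H_r} x_j \pm \eta\|u\|_2$ at unit-vector initial conditions lying in a common block and concludes via the forward-equivalence criterion of lumping. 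That reduction to the $0/1$ case is forced on the paper because its probing argument compares $V(e_i,\tau)$ with $V(e_{i'},\tau)$ for $i,i'$ in the same block, which requires $Ce_i = Ce_{i'}$ — false for weighted proper $L$. You sidestep this entirely by comparing original against \emph{reduced} values (the other consequence of cost preservation), probing with general linear costs $c^\top L x$, and extracting the lumping conditions $L\ub = (L\ub L^+)L$ and $L\lb = (L\lb L^+)L$ directly from a box-maximization that exploits properness (each column of $L$ has at most one, positive, nonzero entry). The shared analytic core is the same in both proofs — the one-sided derivative of the value function at $\tau = 0^+$, justified by the right-limit admissibility assumption; your Danskin-type step is exactly the paper's $o(\tau)$ expansion, and your Gronwall/compactness sketch for uniformity is adequate (indeed more explicit than the paper's). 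Two advantages of your route: you get both bounds from the supremum alone by the sign flip $c \mapsto -c$ (the paper needs separate $\inf$/$\sup$ arguments with norm-penalty terms $\pm\eta\|u\|_2$ to neutralize the exogenous control, where you simply cancel the identical finite term $\sup_{u\in U} c^\top LBu$ on both sides), and your argument is self-contained, needing neither the $CD$ factorization nor the external forward-equivalence result. What the paper's route buys instead is reuse of the UCTMC machinery of~\cite{uctmcTAC} and the standalone three-way equivalence of Proposition~\ref{generalized:UCRN}. One small point you should make explicit: the box-maximization over $R \in [L\lb L^+; L\ub L^+]$ presupposes this interval is well-defined, which holds because properness gives $LL^T$ diagonal with positive entries, hence $L^+ = L^T(LL^T)^{-1} \geq 0$ and $L\ub L^+ - L\lb L^+ = L(\ub - \lb)L^+ \geq 0$.
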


\subsection{Control reconstruction and proofs of Theorem~\ref{thm:coincidence:trajectories} and~\ref{thm:coincidence:cost:functionals}}

In this section we give some insight about the results previously presented.

\begin{definition}\label{induced:partition}
Given a proper matrix $L \in \RE_{\geq 0}^{k \times n}$, 
for every integer $1\leq \w \leq k$, let us introduce the disjoint subsets of indexes (called blocks)
$H_{\w} = \supp( \text{$\w$-th row of } L ),
$ 
and $H_0 = \{ i \mid \text{ the $i$-th column of $L$ is zero}\}$. Finally, for every $1 \leq i \leq n$, let us denote by $[i]$ the unique set of the above such that $i \in H_{[i]}$.
\end{definition}
In the above terminology, note that $H_0$ is the complement of the disjoint union $\bigcup_{\w=1}^k H_{\w}$. A proper lumping deletes the state variables indexed by $H_0$ and  aggregates the remaining variables indexed by $\bigcup_{\w=1}^k H_{\w}$.
Then for every $x \in \R^n$ the aggregation $Lx$ has the form
\[
Lx = (\sum_{i \in H_1} \lambda_i x_i, \ldots, \sum_{i \in H_k} \lambda_i x_i)^T
\]
for positive weights $\lambda_i$.

Proposition~\ref{lumping+:as:cdp} below formalizes this discussion, by ensuring that $L$ can be written as a product $CD$, where $C$ encodes the blocks, while $D$ captures the weight associated by $L$ to each variable. Moreover, we identify the matrices of this form that are lumpings.

\begin{proposition}\label{lumping+:as:cdp}
If $L \in \RE_{\geq 0}^{k \times n}$ is a proper matrix, then it can be written as $L = CD$, where  
\begin{itemize}
\item $C \in \{0,1\}^{k \times n}$ is given by $C_{\w,i} = 1$ if $L_{\w,i} > 0$ and $C_{\w,i} = 0$ otherwise, for any $1 \leq \w \leq k$ and $1 \leq i \leq n$;
\item 
$D = \diag (\lambda_i)_{1 \leq i \leq n} \in \R_{> 0}^{n \times n}$ is the diagonal matrix whose $(i,i)$-th entry $\lambda_i$ is one of the positive entries of $L$ if 
$i \notin H_0$, while $\lambda_i = 1$ otherwise.
\end{itemize}

Given matrices $A$ and $O$, $L$ is a constrained proper lumping of $A$ (wrt $O$) if and only if $C$ is a constrained proper lumping of $D A D^{-1}$ (wrt $O D^{-1}$).
\end{proposition}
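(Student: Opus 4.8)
The plan is to reduce every condition in the definition of constrained proper lumping for $L$ to the corresponding condition for $C$ by exploiting that right-multiplication by the invertible diagonal matrix $D$ is a linear automorphism of $\RE^n$, and hence preserves all the subspace inclusions that define lumping.

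First I would verify the factorization itself: since the rows of $L$ have pairwise disjoint supports, every column index $i \notin H_0$ lies in the support of a \emph{unique} row, so the positive entry $\lambda_i$ occurring in that row is well defined, and a direct entrywise check gives $(CD)_{\w,i} = C_{\w,i}\lambda_i = L_{\w,i}$ (the identity also holds trivially for $i \in H_0$, where both sides vanish regardless of the value $\lambda_i = 1$). Next I would record that $C$ is itself proper: the support of the $\w$-th row of $C$ equals that of the $\w$-th row of $L$, so these supports stay pairwise disjoint; each is non-empty because $L$ has no zero rows; and since $D$ is invertible, $\operatorname{rank}(C) = \operatorname{rank}(CD) = \operatorname{rank}(L) = k$. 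Thus the properness requirement holds on \emph{both} sides of the claimed equivalence independently of $A$ and $O$, and it remains only to match the constraint and lumping conditions.

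The key algebraic observation I would isolate as a small lemma is that for any matrix $M$ one has $\rowspan(MD) = \{\, v D \mid v \in \rowspan(M)\,\}$, so that for any two matrices $\rowspan(M_1) \subseteq \rowspan(M_2)$ holds if and only if $\rowspan(M_1 D) \subseteq \rowspan(M_2 D)$, because $v \mapsto v D$ is a bijection of $\RE^n$ (right multiplication by the invertible $D$) carrying one side onto the other. Applying this with $D^{-1}$ in place of $D$ and using $C = L D^{-1}$, the constraint condition transfers directly: $\rowspan(O) \subseteq \rowspan(L)$ is equivalent to $\rowspan(O D^{-1}) \subseteq \rowspan(L D^{-1}) = \rowspan(C)$, which is exactly the constraint $\rowspan(OD^{-1}) \subseteq \rowspan(C)$ for $C$ with respect to $OD^{-1}$.

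For the lumping condition I would first compute $C \cdot (D A D^{-1}) = (L D^{-1}) D A D^{-1} = L A D^{-1}$, so that $\rowspan(C\,DAD^{-1}) = \rowspan(LA\,D^{-1})$ while $\rowspan(C) = \rowspan(L\,D^{-1})$. The same bijection lemma (again with $D^{-1}$) then yields $\rowspan(LA) \subseteq \rowspan(L)$ if and only if $\rowspan(LA\,D^{-1}) \subseteq \rowspan(L\,D^{-1})$, i.e.\ if and only if $\rowspan(C\,DAD^{-1}) \subseteq \rowspan(C)$. Combining the constraint and lumping equivalences with the unconditional properness of $L$ and $C$ gives the stated biconditional. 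I expect the only real subtlety to be the bookkeeping in the bijection lemma---keeping straight that right multiplication by an invertible matrix is an automorphism of the ambient space $\RE^n$ and therefore preserves inclusions of row spaces in both directions; once that is in place the two conditions fall out by substituting $C = L D^{-1}$ and $C\,DAD^{-1} = L A D^{-1}$.
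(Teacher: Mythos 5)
Your proof is correct, and it takes a genuinely different route from the paper's. The paper treats the factorization $L = CD$ and the properness of $C$ as self-evident (you verify both explicitly, which is a plus) and then derives the biconditional in one line ``from the definition of lumping and from the fact that $(CD)^+ = D^{-1}C^+$'', i.e., it routes through the generalized right inverse, implicitly via the characterization $\rowspan(X) \subseteq \rowspan(L) \Leftrightarrow X = XL^+L$ recorded in the remark after Proposition~\ref{preservation:traj:0}. Your key lemma --- right multiplication by an invertible matrix is a linear automorphism of $\RE^n$ and hence preserves row-space inclusions in both directions --- never mentions $L^+$ at all, and this buys you more than brevity: with the paper's definition $L^+ = L^T(LL^T)^{-1}$, the identity $(CD)^+ = D^{-1}C^+$ is in fact false in general. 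Already for the paper's running example $L = \left(\begin{smallmatrix}1&2&0\\0&0&1\end{smallmatrix}\right) = CD$ with $C = \left(\begin{smallmatrix}1&1&0\\0&0&1\end{smallmatrix}\right)$ and $D = \diag(1,2,1)$, the first column of $L^+$ is $(1/5,\,2/5,\,0)^T$ whereas the first column of $D^{-1}C^+$ is $(1/2,\,1/4,\,0)^T$; both are right inverses of $L$, but they are not equal. What is true --- and all the paper's argument actually needs --- is that $D^{-1}C^+$ is \emph{a} right inverse of $CD$ (indeed $CD\,D^{-1}C^+ = CC^+ = I_k$), combined with the fact that $\rowspan(X) \subseteq \rowspan(L)$ iff $X = XGL$ for \emph{any} right inverse $G$ of $L$, not only the Moore--Penrose one; the same repair applies where the identity is reused in the proofs of Theorems~\ref{thm:coincidence:trajectories} and~\ref{thm:coincidence:cost:functionals} and of Proposition~\ref{prop:reconstruction:general} (harmless there, because once $L$ is a lumping of $A$ the product $LAG$ is the same matrix for every right inverse $G$). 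Your row-space argument sidesteps this issue entirely; what the paper's formulation buys in exchange is an explicit right-inverse relation that those later proofs substitute into the reduced bounds $L\lb L^+$ and $L\ub L^+$, which your purely subspace-level argument does not by itself provide.
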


A proper matrix $L$ and the matrix $C$ introduced in Proposition~\ref{lumping+:as:cdp} define the same blocks, introduced as in Definition \ref{induced:partition}. Indeed, for every $r \leq k$, the $r$-th row of $L$ and the $r$-th row of $C$ have the same support. 
So there is no ambiguity if we write $[i]$ for the block containing an index $i$.

The next results can be proven by generalizing the proof for uncertain continuous-time Markov chains (UCTMCs) from~\cite{uctmcTAC} and by invoking Proposition~\ref{lumping+:as:cdp}. According to~\cite{uctmcTAC}, a UCTMC is a continuous-time Markov chain whose transition rates from state $j$ into state $i$ are subject to the constraint $A_{i,j}(\cdot) \in [\lb_{i,j} ; \ub_{i,j}]$.

Moreover, while~\cite{uctmcTAC} confined itself to lumpings $C \in \{0,1\}^{k \times n}$ that encode blocks, Proposition~\ref{lumping+:as:cdp} allows us to consider proper lumpings $L$ by expressing these as suitably stretched versions of aforementioned matrices $C$, i.e., $L = CD$.


\begin{proposition}[Cost Preservation -- special case]\label{generalized:UCRN}
Fix a PCS $([\lb;\ub],O,B,U)$, and a proper matrix $C \in \{0,1\}^{k \times n}$. 
Then the following conditions are equivalent:
\begin{enumerate}
\item $C$ is a proper lumping of both $\lb$ and $\ub$;
\item $C$ preserves the costs of $([\lb;\ub],O,B,U)$;
\item for any initial condition $x[0] \in \RE_{\geq0}$ and $u(\cdot) \in U$:
\begin{itemize}
    \item[i)] Any $A(\cdot) \in [\lb;\ub]$ has an $\B(\cdot) \in [ C \lb C^+; C \ub C^+ ]$ such that $C x^{(A,u)} = y^{ ( \B,u ) }$,
    \item[ii)] Any $\B(\cdot) \in [ C \lb C^+; C \ub C^+ ]$ has an $A(\cdot) = ( A_{i,j}(\cdot) )_{i,j}\in [\lb;\ub]$, defined as follows, such that $C x^{(A,u)} = y^{(\B,u)}$ and $C$ is a lumping for $A$. Let $\lbb = C \lb C^+$, and
    for $i,j$ with $[i], [j] \neq 0$, we define
    \begin{align*}
    A_{i,j}(t) & = \displaystyle{ \lb_{i,j} + \frac{ (\ub_{i,j} - \lb_{i,j}) ( \B_{[i],[j]}(t) - \lbb_{[i],[j]})}{ \sum_{k\in [i]} ( \ub_{k,j} - \lb_{k,j} ) }}
    \end{align*}
    provided the division is well-defined; in all other cases, we set $A_{i,j}(t) = \lb_{i,j}$.
\end{itemize}
\end{enumerate}
\end{proposition}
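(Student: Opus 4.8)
The plan is to prove the three-way equivalence by establishing a cycle of implications $(1) \Rightarrow (3) \Rightarrow (2) \Rightarrow (1)$, where the central work lies in constructing the explicit matrix $A$ in part (3ii) and verifying it has the required properties. Since $C \in \{0,1\}^{k \times n}$ is a proper matrix encoding genuine blocks (its nonzero entries are all $1$, so each aggregated variable $y_r = \sum_{i \in H_r} x_i$ is an honest sum), many of the algebraic simplifications from the UCTMC setting of~\cite{uctmcTAC} carry over directly. In particular, the generalized inverse $C^+$ has a transparent combinatorial form, and $C \lb C^+$, $C \ub C^+$ are again Metzler, so the reduced PCS is well-defined.

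For $(1) \Rightarrow (3)$, I would argue as follows. Direction (3i) is essentially Theorem~\ref{thm:coincidence:trajectories} specialized to $L = C$: if $C$ lumps both $\lb$ and $\ub$, then for any admissible $A(\cdot)$ the aggregated trajectory $C x^{(A,u)}$ satisfies a reduced equation whose matrix $\B(t) = C A(t) C^+$ lies in $[\lbb; \ubb]$, using that the lumping condition $\rs(CA) \subseteq \rs(C)$ forces $C A = (C A C^+) C$, together with $CB = CB$ and $C x[0]$ giving consistent initial data. The substantive part is (3ii): given a target reduced control $\B(\cdot)$, I must reconstruct an admissible $A(\cdot) \in [\lb; \ub]$ whose aggregate reproduces it. The explicit formula distributes the required block-level deviation $\B_{[i],[j]}(t) - \lbb_{[i],[j]}$ proportionally among the rows $k \in [i]$ according to their available slack $\ub_{k,j} - \lb_{k,j}$. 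I would verify three things: first, that $\lb_{i,j} \le A_{i,j}(t) \le \ub_{i,j}$ (feasibility), which follows because $\B_{[i],[j]}(t)$ ranges in $[\lbb_{[i],[j]}; \ubb_{[i],[j]}]$ and the chosen fraction is between $0$ and $1$; second, that this $A$ reproduces the reduced dynamics, i.e. the aggregated equation for $C x^{(A,u)}$ has matrix exactly $\B$, which amounts to checking $\sum_{i \in [p]} A_{i,j}(t) = \B_{[p],[j]}(t) \cdot (\text{column normalization})$ block by block using that $C$ sums within blocks; and third, that $C$ is a lumping for this particular $A$, so that Proposition~\ref{preservation:traj:0} applies and the aggregate trajectory genuinely solves the reduced ODE. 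The admissibility (piecewise continuity with right limits) of the reconstructed $A$ is inherited from that of $\B$ since the formula is an affine, hence continuous, function of $\B(t)$.

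For $(3) \Rightarrow (2)$, I would use $L$-invariance of the costs together with the trajectory correspondence. Because $\fin$ and $\run$ depend on $x$ only through $Cx$, and $\hfin, \hrun$ are defined by composing with $C^+$, every original cost $J_{(A,u)}(x[0],\tau)$ equals the reduced cost $\hat J_{(\B,u)}(y[0],\tau)$ whenever $C x^{(A,u)} = y^{(\B,u)}$ and $y[0] = C x[0]$; here one uses $C C^+ = I_k$ so that $C^+ C x^{(A,u)} $ and $x^{(A,u)}$ agree under the cost maps. The two bijective-at-the-cost-level correspondences in (3i) and (3ii) then show the original and reduced feasible cost sets coincide, so their infima and suprema match, giving $V^{\inf} = \hat V^{\inf}$ and $V^{\sup} = \hat V^{\sup}$. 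This is the cost-preservation conclusion already asserted in the last line of Theorem~\ref{thm:coincidence:trajectories}, so the argument is largely a bookkeeping assembly of that theorem's content in the special case.

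The remaining and most delicate implication is $(2) \Rightarrow (1)$, the converse: cost preservation must force the algebraic lumping conditions on $\lb$ and $\ub$ individually. This is the heart of Theorem~\ref{thm:coincidence:cost:functionals} specialized to $C$, and the plan is to contrapose: assuming $C$ is \emph{not} a lumping of (say) $\ub$, I would construct a specific $L$-invariant cost and initial condition witnessing $V^{\sup} \neq \hat V^{\sup}$. The natural device is to exploit the right-limit assumption on admissible controls to probe the dynamics at short time horizons, choosing a reachability-type or linear-in-$x$ running cost whose optimal value, to first order in $\tau$, reads off a quantity that depends on whether $\rs(C\ub) \subseteq \rs(C)$ holds. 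I expect this to be the main obstacle: one must show that a violation of $\rs(C\ub)\subseteq \rs(C)$ produces a genuine discrepancy that survives the optimization over all admissible $(A,u)$ rather than being washed out, and this requires a careful short-time expansion of $x^{(A,u)}(\tau)$ together with a choice of cost that isolates the offending component orthogonal to $\rs(C)$. Once this contrapositive step is in place for both $\lb$ and $\ub$, the cycle closes and the equivalence of (1), (2), (3) follows.
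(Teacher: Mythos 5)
Your cycle $(1)\Rightarrow(3)\Rightarrow(2)\Rightarrow(1)$ matches the paper's structure, and your treatment of (3ii) and of $(3)\Rightarrow(2)$ is sound, but there are two genuine problems. First, your argument for $(1)\Rightarrow(3\mathrm{i})$ is wrong: you claim that for an arbitrary admissible $A(\cdot)\in[\lb;\ub]$ the lumping condition forces $CA=(CAC^+)C$, so that $\B(t)=CA(t)C^+$ does the job. But the lumping hypothesis holds only for the \emph{extremal} matrices $\lb,\ub$, not for intermediate $A$; this is precisely the difficulty the proposition addresses. Concretely, take $n=2$, $k=1$, $C=(1\ \ 1)$, $\lb=0$, $\ub$ the all-ones $2\times2$ matrix: $C$ lumps both bounds, yet $A=\diag(1,0)\in[\lb;\ub]$ has $CA=(1\ \ 0)\notin\rs(C)$, and with $x[0]=(1,0)^T$, $u=0$ one gets $Cx^{(A)}(t)=e^{t}$ while $y^{(CAC^+)}(t)=e^{t/2}$ since $CAC^+=\tfrac12$. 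The correct reduced control is trajectory-dependent, e.g.\ $\B_{r,s}(t)=\sum_{i\in H_r}\sum_{j\in H_s}A_{i,j}(t)\,x_j(t)\,/\sum_{j\in H_s}x_j(t)$, and it lies in $[\lbb;\ubb]$ only because the solution of a PCS stays non-negative, so each entry is a convex combination of the intra-block column sums, which are bracketed by those of $\lb$ and $\ub$ thanks to their lumpability. This positivity-based averaging is the core of the UCTMC argument the paper imports, and it is absent from your proposal. (Invoking Theorem~\ref{thm:coincidence:trajectories} here is also circular: the paper proves that theorem \emph{from} this proposition.)

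Second, the converse direction $(2)\Rightarrow(1)$ is not actually proved: you explicitly defer it as ``the main obstacle,'' and the plan you sketch -- choose a cost that ``isolates the offending component orthogonal to $\rs(C)$'' -- cannot work, because cost preservation quantifies only over \emph{$C$-invariant} costs, i.e.\ costs factoring through $Cx$; no admissible cost can see a component orthogonal to $\rs(C)$. The paper's proof instead exploits properness of $C$: for a $\{0,1\}$ proper matrix, lumpability of a matrix $A$ is the forward-equivalence condition that $\sum_{j\in H_r}A_{j,i}$ is constant over $i$ ranging in any fixed block. One then compares the two initial conditions $x[0]=e_i$ and $x[0]'=e_{i'}$ with $i,i'\in H_{r'}$, which satisfy $Ce_i=Ce_{i'}$, under the $C$-invariant costs $\run=0$ and $\fin(x,u)=\sum_{j\in H_r}x_j+\|u\|_2$; a first-order expansion in $\tau$ (valid by the right-limit assumption on admissible controls, with the $+\|u\|_2$ term and $B\geq0$, $u\geq0$ forcing the infimum at $u(0)=0$, $A(0)=\lb$) yields $\sum_{j\in H_r}\lb_{j,i}=\sum_{j\in H_r}\lb_{j,i'}$, hence $C$ lumps $\lb$; the bound $\ub$ is handled with $\fin(x,u)=\sum_{j\in H_r}x_j-\eta\|u\|_2$ for $\eta$ large, using boundedness of $U$. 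These ingredients -- pairs of unit initial vectors in a common block, the sign-tuned $\|u\|_2$ terms, and the forward-equivalence characterization -- are the missing ideas without which your converse step does not close.
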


The proofs of Theorems \ref{thm:coincidence:trajectories} and~\ref{thm:coincidence:cost:functionals} consist in reducing the general case to the case considered in Proposition~\ref{generalized:UCRN}. 
Additionally, here follows separately a more detailed statement about how to recover optimal controls for the original system (from ones of the reduced system), providing an explicit formula for them.

\begin{proposition}[Control Reconstruction]\label{prop:reconstruction:general}
Assume that $L$ is a proper positive lumping of the PCS $([\lb;\ub],O,B,U)$.
Write $L = CD$ according to Proposition~\ref{lumping+:as:cdp}, so that $C$ is a lumping of both $\underline{M} = D \lb D^{-1}$ and $\overline{M} = D \ub D^{-1}$, and let $\lbb = L \lb L^+$. 

Then, for any $\B(\cdot) \in [ L \lb L^+; L \ub L^+ ]$, we can define $A(\cdot) = ( A_{i,j}(\cdot) )_{i,j}\in [\lb;\ub]$ as follows. If $[i], [j] \neq 0$, set
\begin{multline*}
\hspace{-10pt}
A_{i,j}(t) = \lambda_{i}^{-1} \lambda_j
\Big(
\underline{M}_{i,j} + \frac{ ( \overline{M}_{i,j} - \underline{M}_{i,j} ) ( \B_{[i],[j]}(t) - \lbb_{[i],[j]} ) }{ \sum_{k\in [i]} ( \overline{M}_{k,j} - \underline{M}_{k,j} ) }
\Big)
%
%
\end{multline*}
provided the division is well-defined; in all other cases, we set $A_{i,j}(t) = \lambda_{i}^{-1} \lambda_j \underline{M}_{i,j}$.

Then $L$ is a lumping for $A(\cdot)$ and $\B(\cdot) = L A(\cdot) L^+$, so $L x^{(A,u)} = y^{(\B,u)}$ for any $x[0] \in \REp^n$ and any $u(\cdot) \in U$.
\end{proposition}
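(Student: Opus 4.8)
The plan is to prove Proposition~\ref{prop:reconstruction:general} by reducing it to the already-available special case, Proposition~\ref{generalized:UCRN}, via the decomposition $L = CD$ from Proposition~\ref{lumping+:as:cdp}. First I would record what that decomposition buys us: by Proposition~\ref{lumping+:as:cdp}, since $L$ is a proper lumping of $\lb$ and $\ub$ with respect to $O$, the $\{0,1\}$-matrix $C$ is a proper lumping of $\underline{M} = D\lb D^{-1}$ and $\overline{M} = D\ub D^{-1}$ with respect to $OD^{-1}$. Hence the hypotheses of Proposition~\ref{generalized:UCRN} apply to the PCS with extremal matrices $\underline{M}, \overline{M}$ and lumping $C$, and I can invoke part ii) of condition (3) there. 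The key observation connecting the two scales is that the reduced matrices agree: one checks that $C\underline{M}C^+ = L\lb L^+ = \lbb$ and likewise $C\overline{M}C^+ = L\ub L^+$, because $L = CD$ forces $L^+ = D^{-1}C^+$ (here $D$ is invertible, so $(CD)(CD)^T = C D^2 C^T$ and the generalized inverse factors cleanly through the diagonal $D$). Thus any $\B(\cdot) \in [L\lb L^+; L\ub L^+]$ is also in $[C\underline{M}C^+; C\overline{M}C^+]$, and Proposition~\ref{generalized:UCRN}ii) produces a matrix control, call it $M(\cdot) \in [\underline{M};\overline{M}]$, given by exactly the displayed formula with $\lb, \ub$ replaced by $\underline{M}, \overline{M}$, satisfying $C x^{(M,u)} = y^{(\B,u)}$ and $\B = CMC^+$, with $C$ a lumping for $M$.

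Next I would undo the diagonal change of coordinates to obtain the claimed $A(\cdot)$. Setting $A(\cdot) = D^{-1} M(\cdot) D$ gives $A_{i,j} = \lambda_i^{-1} M_{i,j} \lambda_j$, and substituting the formula for $M_{i,j}$ yields precisely the displayed expression for $A_{i,j}(t)$ in the statement, including the fallback $A_{i,j}(t) = \lambda_i^{-1}\lambda_j \underline{M}_{i,j}$ when the division is undefined. Since $M(\cdot) \in [\underline{M};\overline{M}] = [D\lb D^{-1}; D\ub D^{-1}]$, conjugating back by $D^{-1}(\cdot)D$ gives $A(\cdot) \in [\lb;\ub]$, which is the required admissibility. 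It remains to transport the two conclusions of Proposition~\ref{generalized:UCRN}ii) through the coordinate change. For the lumping property, $C$ being a lumping of $M$ together with $A = D^{-1}MD$ and Proposition~\ref{lumping+:as:cdp} (applied now in the reverse direction, to $A$) shows that $L = CD$ is a lumping of $A$, and a direct computation using $L^+ = D^{-1}C^+$ gives $L A L^+ = C D (D^{-1} M D) D^{-1} C^+ = C M C^+ = \B(\cdot)$.

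For the trajectory identity I would relate the solutions in the two coordinate systems. The change of variables $x \mapsto Dx$ maps solutions of $\partial_t x = A x + Bu$ to solutions of $\partial_t (Dx) = (DAD^{-1})(Dx) + DBu = M (Dx) + DBu$; since the input term must also match, I note that the aggregation only sees $C(Dx) = Lx$, and the reduced input $LB = C(DB)$ is consistent on both sides, so $D x^{(A,u)} = x^{(M,u)}$ as solutions driven by the same $u$. Combining with $C x^{(M,u)} = y^{(\B,u)}$ from the special case gives $L x^{(A,u)} = C D x^{(A,u)} = C x^{(M,u)} = y^{(\B,u)}$, which is the desired conclusion. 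The main obstacle I anticipate is the bookkeeping around the generalized inverse: one must verify carefully that $L^+ = D^{-1}C^+$ (equivalently that $L L^+ = I_k$ is inherited from $C C^+ = I_k$ through the diagonal scaling) and that the division-well-definedness condition is preserved under multiplication by the positive scalars $\lambda_i^{-1}\lambda_j$, since the denominators $\sum_{k\in[i]}(\overline{M}_{k,j} - \underline{M}_{k,j})$ live in the $M$-coordinates while the stated formula is written in the $A$-coordinates; but because each $\lambda_i > 0$, the vanishing of a denominator is unaffected by scaling, so the two fallback cases correspond exactly.
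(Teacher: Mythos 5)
Your proposal is correct and takes essentially the same route as the paper's own proof: decompose $L = CD$, invoke Proposition~\ref{generalized:UCRN}(3ii) for $C$ with the conjugated bounds $\underline{M}, \overline{M}$ to obtain $\tilde{A}(\cdot) \in [\underline{M};\overline{M}]$ with $C x^{(\tilde{A},u)} = y^{(\B,u)}$, recognize the stated formula as $A(\cdot) = D^{-1}\tilde{A}(\cdot)D$, get admissibility from $[\underline{M};\overline{M}] = D[\lb;\ub]D^{-1}$, and transport trajectories through the diagonal change of coordinates (your inline computation $\partial_t(Dx) = M(Dx) + DBu$ is exactly the paper's Lemma~\ref{tech:lemma:conjugated:drifts}). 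The only differences are cosmetic: you re-derive the conjugation lemma instead of citing it, and your parenthetical justification of $(CD)^+ = D^{-1}C^+$ is the same unproved assertion the paper itself relies on.
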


\section{Computation of constrained proper lumping}\label{sec:existence:lumping+}

This section studies the computation of PCS lumpings. We recall that 
finding a PCS lumping amounts to finding a proper lumping of the linear dynamical systems $\partial_t x = \lb x(t)$ and $\partial_t x = \ub x(t)$, constrained with respect to an output matrix $O$. 
This can be achieved by making use of the efficient algorithm CLUE from~\cite{10.1093/bioinformatics/btab258} which computes the minimal constrained lumping $L$ for a system of polynomial differential equations
\begin{align}\label{polynomial:odes}
\partial_t x(t) & = p(x(t)), &
		z(t) & = O \cdot x(t).
\end{align}

Indeed, as shown in~\cite{10.1093/bioinformatics/btab258}, $L$ is a constrained lumping of \eqref{polynomial:odes} if and only if $L$ is a constrained lumping of a certain family of linear dynamical systems that arise from the total derivative of $p$. Feeding CLUE with $\lb$ and $\ub$ instead those linear systems yields then the following.

\begin{proposition}[\cite{10.1093/bioinformatics/btab258}]\label{prop:alg:clue}
For a PCS $([\lb;\ub],O,B,U)$:
\begin{itemize}
    \item 
    Applying CLUE to $\lb$, $\ub$ gives a constrained lumping $L \in \RE^{k \times n}$ of both $\lb$ and $\ub$ (wrt $O$), 
    and with row space dimension $k$;
    \item such $k$ is minimal satisfying all the above properties;
    \item the matrix $L$ is in reduced echelon-form;
    \item the computation of $L$ requires at most $\calO(kn(e+k))$ steps, where $e$ is the total number of non-zero entries in $\lb$, $\ub$ and $O$.
\end{itemize}
\end{proposition}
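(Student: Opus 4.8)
The plan is to derive all four bullets as a direct application of the correctness and complexity guarantees of CLUE established in~\cite{10.1093/bioinformatics/btab258}; the only genuine work is to cast the problem of finding a \emph{common} lumping of $\lb$ and $\ub$ into the single-run framework that CLUE solves. Recall from Definition~\ref{new:def:lumping} that $L$ is a constrained lumping of a matrix $A$ with respect to $O$ exactly when its row space, which I will write $W = \rowspan(L)$, contains $\rowspan(O)$ and satisfies $WA \subseteq W$, i.e.\ $W$ is invariant under right multiplication by $A$. Hence a matrix $L$ is simultaneously a lumping of $\lb$ and of $\ub$ (wrt $O$) if and only if $\rowspan(L)$ is a subspace containing $\rowspan(O)$ that is invariant under both $\lb$ and $\ub$.

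First I would observe that the family of subspaces containing $\rowspan(O)$ and invariant under both matrices is closed under intersection: if $w$ lies in two subspaces each invariant under a fixed matrix $A$, then so does $wA$, and the containment of $\rowspan(O)$ is preserved. Since $\RE^n$ itself belongs to this family, a unique minimal such subspace $W^\ast$ exists, and its dimension is the minimal $k$ asserted in the second bullet, independently of any algorithm. The remaining task is to show that CLUE returns a basis of $W^\ast$. As recalled just before the statement, CLUE reduces the lumping of a polynomial system to the lumping of a finite family of linear systems obtained from the total derivative of the right-hand side, and internally computes the smallest subspace containing the rows of $O$ that is invariant under every matrix of the family, expressed in reduced echelon form. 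Supplying CLUE with the two linear systems given by $\lb$ and $\ub$ as this family therefore yields exactly a reduced-echelon basis of $W^\ast$; reading off its rows gives $L$, its dimension $k$, the minimality of $k$, and the echelon-form property, i.e.\ the first three bullets.

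The step I expect to require the most care is the complexity bound of the last bullet. CLUE proceeds by iteratively enlarging a basis of the candidate subspace: in each of at most $k$ iterations it multiplies the newest basis vector by each matrix of the family and reduces the results against the current basis. With $\lb$, $\ub$ and $O$ together containing $e$ nonzero entries, each sparse matrix-vector product and each comparison against the rows of $O$ costs $\calO(e)$, while reducing against the up-to-$k$ stored basis vectors of length $n$ costs $\calO(kn)$; summing $\calO(n(e+k))$ per iteration over the $k$ iterations gives $\calO(kn(e+k))$. Here I would only need to confirm that passing two matrices rather than one merely doubles the matrix-vector work per iteration, so it is absorbed into the hidden constant and the bound of~\cite{10.1093/bioinformatics/btab258} transfers unchanged.
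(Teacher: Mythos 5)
Your proposal is correct and follows essentially the same route as the paper, which does not prove this proposition itself but imports it from the CLUE reference, justifying it exactly as you do: a common constrained lumping of $\lb$ and $\ub$ with respect to $O$ is the same as a subspace containing $\rowspan(O)$ and invariant under both matrices, and CLUE applied to the family $\{\lb,\ub\}$ returns a reduced-echelon basis of the unique minimal such subspace together with its complexity guarantee. Your additional details (the intersection-closure argument for existence of the minimal invariant subspace and the per-iteration cost accounting) are sound elaborations of what the citation covers, so there is nothing to correct.
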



\begin{example}\label{example:lumping:produced:by:clue}
For the PCS in Example~\ref{example:uncertain:case}, 
CLUE produces the lumping
$
L =
\begin{pmatrix}
1 & 2 & 0 \\
0 & 0 & 1
\end{pmatrix}
$.
All entries of $L$ are non-negative and the supports of its rows are disjoint, hence $L$ is a constrained proper positive lumping of the PCS.
\end{example}

In general, $L$ from Proposition~\ref{prop:alg:clue} may have negative entries, hence it may not be a constrained proper 
lumping. 
The following result certifies that $L$ is the only candidate to check for being a constrained proper 
lumping with minimum row dimension.

\begin{theorem}\label{Clue:for:m:and:M}
Fix a PCS $([\lb;\ub],O,B,U)$, and let $L \in \RE^{k \times n}$ be the result of CLUE as in Proposition~\ref{prop:alg:clue} for $\lb$ and $\ub$ with output map $O$. If $L$ is not proper, then there is no constrained proper lumping with row space dimension $k$.
\end{theorem}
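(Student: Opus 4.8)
The plan is to prove the contrapositive statement: if a constrained proper lumping $L'$ of row space dimension $k$ exists, then the matrix $L$ produced by CLUE is itself proper, contradicting the hypothesis. The argument combines the uniqueness of the minimal lumping row space with a lemma asserting that the reduced echelon form of a subspace spanned by a proper matrix is again proper.

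First I would argue that any constrained lumping of $\lb$ and $\ub$ with respect to $O$ of dimension $k$ shares the same row space as $L$. By Definition~\ref{new:def:lumping}, the row space of such a lumping contains $\rowspan(O)$ and is invariant under right multiplication by $\lb$ and by $\ub$. Since an intersection of invariant subspaces containing $\rowspan(O)$ is again of this type, there is a unique minimal such subspace $W^\ast$, contained in the row space of every constrained lumping; as $L$ is a constrained lumping we have $W^\ast \subseteq \rowspan(L)$, and by the minimality of $k$ in Proposition~\ref{prop:alg:clue} the dimensions coincide, whence $W^\ast = \rowspan(L)$. A constrained proper lumping is, in particular, a constrained lumping of both $\lb$ and $\ub$ with respect to $O$, so $\rowspan(L') = W^\ast = \rowspan(L)$.

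The core step is the lemma: if $W = \rowspan(L')$ for a proper matrix $L'$ with rows $w_1,\dots,w_k$ and pairwise disjoint supports $S_r = \supp(w_r)$, then the unique reduced echelon form of $W$ is proper. Reordering the rows so that $p_1 < \dots < p_k$ with $p_r := \min S_r$, I would first pin down the pivot columns: projecting $W$ onto its first $j$ coordinates yields a space of dimension $|\{r : p_r \le j\}|$, because the projected rows retain disjoint supports and hence stay linearly independent, so the dimension increases by one exactly at each $p_r$; thus the pivots are $p_1,\dots,p_k$. Now each echelon row $v_r$ satisfies $(v_r)_{p_{r'}} = \delta_{rr'}$, and since $w_{r'}$ is the only basis row supported on $p_{r'}$, writing $v_r$ in the basis $\{w_s\}$ forces all coefficients except the $r$-th to vanish. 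Hence $v_r = (w_r)_{p_r}^{-1} w_r$; because $L'$ is non-negative and $p_r \in S_r$, the scalar is positive, so $v_r \ge 0$ with $\supp(v_r) = S_r$. The echelon rows are therefore non-negative with pairwise disjoint supports, i.e.\ the echelon form is proper.

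Finally, since $L$ is in reduced echelon form with $\rowspan(L) = W^\ast = \rowspan(L')$ and $L'$ is proper, the lemma together with the uniqueness of the reduced echelon form identifies $L$ with the proper echelon form of $W^\ast$; thus $L$ is proper, the desired contradiction. I expect the main obstacle to be the lemma, and specifically the identification of the pivot columns with the block minima $\min S_r$: once this is established, the disjoint-support structure collapses each echelon row onto a single scaled proper row, and positivity follows immediately.
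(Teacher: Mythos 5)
Your proof is correct, but it takes a genuinely different route from the paper's. The paper also starts by identifying $\rowspan(L)$ with the row space of the hypothetical proper lumping (leaning on CLUE's minimal-row-space guarantee, where you make the uniqueness explicit via the intersection-of-invariant-subspaces argument), but it then splits the conclusion into two separate steps, each resting on auxiliary results: disjoint supports of the rows of $L$ are deduced from Proposition~\ref{thm:existence:lump+}, a characterization saying that a rank-$k$ row space admits a basis with pairwise disjoint supports if and only if the non-zero columns of a spanning matrix fall into exactly $k$ classes of pairwise-proportional columns, combined with the fact that the unit vectors $e_{\w}$ appear among the columns of the echelon-form $L$; non-negativity is then proved by a separate contradiction, using Lemma~\ref{new:supp(ra):technical} (columns of $S L$ are multiples of columns of $S$ when the rows of $L$ have disjoint supports) to show that a negative entry $L_{\w,i}<0$ would force the column $S_{\w}$ of the invertible change-of-basis matrix to be simultaneously non-negative and non-positive, hence zero. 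Your single lemma---the reduced echelon form of the row space of a proper matrix is itself proper---replaces both steps at once: pinning the pivots at the block minima $p_r=\min S_r$ via the projection-dimension jumps and expanding each echelon row in the disjoint-support basis gives the explicit identity $v_r=(w_r)_{p_r}^{-1}w_r$, which delivers disjoint supports and non-negativity simultaneously. What each approach buys: the paper's detour yields a reusable existence criterion (Proposition~\ref{thm:existence:lump+}) of independent interest, whereas your argument is more self-contained, gives a sharper structural statement (each echelon row is a positive rescaling of a row of the proper matrix), and fills in the justification for why all minimal constrained lumpings share one row space, which the paper's appeal to minimality leaves implicit.
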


The above result ensures that, in order to compute a constrained proper 
lumping, we can apply the algorithm from~\cite{10.1093/bioinformatics/btab258} to get a matrix $L$. If $L$ is a constrained proper 
lumping for $[\lb;\ub]$, we terminate; otherwise, we are guaranteed that there is no proper lumping with the same row space of $L$.

\begin{remark}
In all models in Section~\ref{section:evaluation}, if the matrix $L$ from Proposition~\ref{prop:alg:clue} has rows with disjoint supports, then it also has non-negative entries, and so it is proper.
\end{remark}

\section{Experimental Evaluation}\label{section:evaluation}

\newcommand{\totalmodels}{1352}
\newcommand{\totalcases}{110808}
\newcommand{\reductioncases}{67371}\newcommand{\reductioncasesP}{60.8\%}
\newcommand{\norwlcases}{39318}\newcommand{\norwlcasesP}{35.5\%}
\newcommand{\noreduction}{2495}\newcommand{\noreductionP}{2.2\%}
\newcommand{\timeoutcases}{1624}\newcommand{\timeoutcasesP}{1.5\%}
\newcommand{\timeout}{10 minutes}
\newcommand{\reponame}{cpl}

We implement PCS proper lumping based on CLUE~\cite{10.1093/bioinformatics/btab258}. The implementation can be found in the \emph{\reponame} branch (see \url{https://github.com/Antonio-JP/CLUE/tree/\reponame}). All examples are available in the \emph{tests/examples/\reponame} folder in the aforementioned repository. This repository also include the scripts necessary to run all the experiments described in this article. Reported experiments refer to a common laptop (Lenovo with 2.8GHz i7 processor) with 23 GB RAM and not exploiting parallelization.

\textbf{Set up.} The models have been taken from the repository of (positive) weighted networks in \url{https://networks.skewed.de/}. We have considered networks in the repository up to 100000 vertices. For each of them we have considered a set of PCS in the form $([0.9A; 1.1A], O_i, 0,\{0\})$ where $A$ is the weighted adjacency matrix of the network and $O_i = e_i^T$ for all $i$ such that the $i$-th row of $A$ is not zero (i.e., the variables that are not constant in the system). Similarly to~\cite{uctmcTAC}, this setting creates interval uncertainties of $\pm 10\%$ around the original values of $A$.

\textbf{Results.} We summarize the results across all matrices $L$ computed by Proposition~\ref{prop:alg:clue} on the considered PCS as follows:
\begin{itemize}
    \item \emph{Proper lumping}: the obtained lumping is a constrained proper 
    lumping with respect to the given $O_i$ and it reduces the size of the original system.
    \item \emph{General lumping}: the obtained lumping is not proper.
    \item \emph{No reduction}: the obtained lumping does not provide any reduction, i.e., the lumping is the identity matrix.
    \item \emph{Timeout examples}: in all experiments, we imposed an arbitrary timeout of \timeout. With this, more than 96\% of the models could be processed. 
\end{itemize}

\begin{table*}[ht]
    \centering
    \begin{tabular}{cccccc}
            \toprule
            \textbf{No. of PCS (no. of models)} & \textbf{Proper} & \textbf{General} &  \textbf{No reduction}  & \textbf{Timeout} \\
            \midrule
            \totalcases~(\totalmodels) &
            \reductioncases~(\reductioncasesP) &
            \norwlcases~(\norwlcasesP) &
            \noreduction~(\noreductionP) &
            \timeoutcases~(\timeoutcasesP)\\
            \bottomrule
        \end{tabular}
        \caption{Summary of results from experiments}
        \label{table:summary}
\end{table*}


In Table~\ref{table:summary} we summarize the obtained results, by splitting them into the different cases described above.
From this table we can see that our method usually finds a proper 
lumping (60.8\% of the cases). It is interesting to remark that in the remaining cases, for the majority of them (35.5\%) we obtain a non-proper lumping. These reductions are not covered by the theory of the current paper. Only the 3.7\% of cases were unsuccessful, either not findind any reduction of the models, or hitting the timeout of \timeout.

\begin{figure}[ht]
    \centering
    \includegraphics[width=\linewidth]{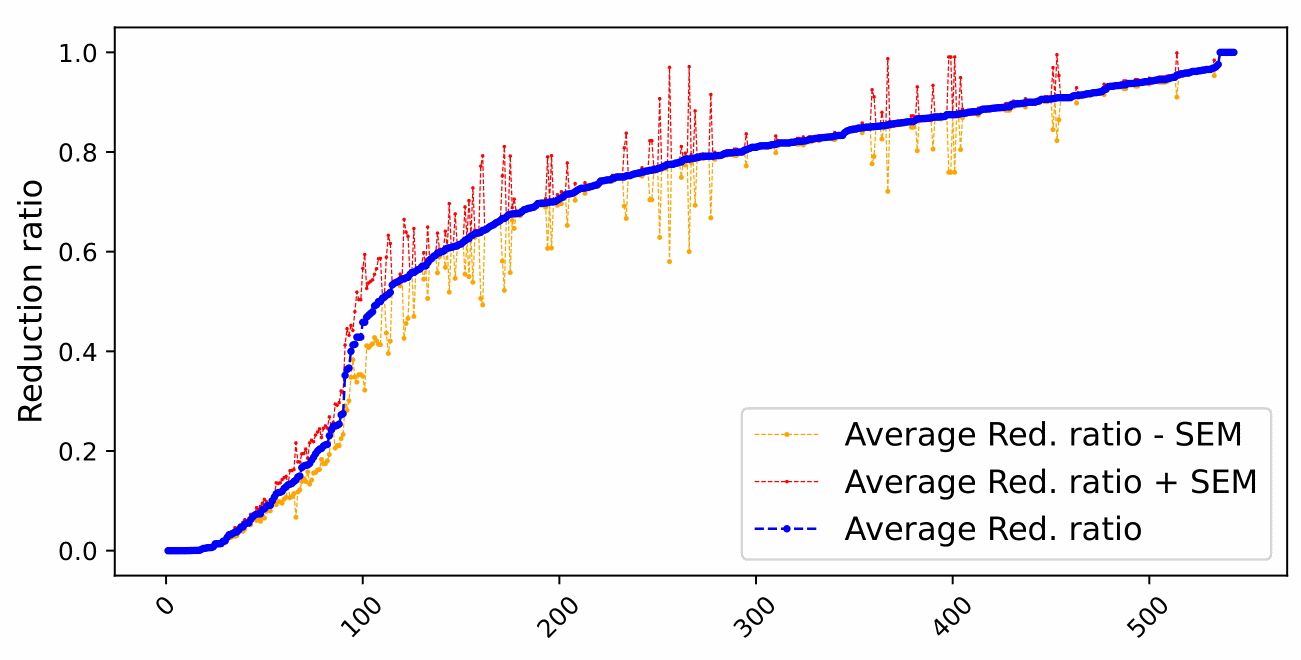}
    \caption{Average reduction ratio for models with some proper lumpings}
    \label{fig:results}
\end{figure}

In Figure~\ref{fig:results} we show, for each model, the average reduction ratio (i.e., number of reduced species over number of original ones in the system) obtained through all possible output matrices $O$. In this figure we have only considered the models where there is at least one case with a proper lumping, and removing all cases with general lumpings (i.e., with a reduction we can not interpret). We also considered the cases with no reduction. The standard error of the mean (SEM) is also included for each model.

From this figure we can see that for more than half of the networks (around 600 models) have at least one case with a proper 
lumping. Specifically, around 30\% of models have a reduction ratio of 65\% or less, thus providing a significant evidence of the usefulness of the proposed method.

\section{Conclusion}

In the paper we introduced constrained proper lumping, a model reduction technique for a class of bilinear control systems described by positive linear control systems (PCS) whose matrices take the role of control inputs. Our main result extends~\cite{uctmcTAC} and ensures that a so-called proper aggregation map $x \mapsto Lx$ is optimality-preserving if and only if the matrix constraints of the PCS $A(t) \in [\underline{A} ; \bar{A}]$ are such that $L$ is a bisimulation (aka lumping)~\cite{DBLP:journals/tac/PappasLS00,LI1994343} of the \emph{uncontrolled} dynamical systems $\partial_t x = \lb x$ and $\partial_t x = \ub x$. Building upon~\cite{10.1093/bioinformatics/btab258}, this allowed for the efficient computation of minimal proper lumpings with respect to given output maps~\cite{DBLP:journals/tac/PappasLS00,bisimulation_lin_sys_Schaft}. The practical applicability of the framework was demonstrated on a number of benchmark case studies. Future work will consider nonlinear reductions. 






\bibliographystyle{IEEEtran}
\bibliography{cdc2023}{}

\section*{Appendix}

\begin{proof}[Proof of Proposition \ref{lumping+:as:cdp}]
In the definition of $D$, for an index 
$i \in H_0$ we arbitrarily defined $\lambda_i = 1$. This choice ensures that $D$ is invertible but any other choice of $\lambda_i \neq 0$ would not affect the aggregation, as such indexes are the ones that do not belong to the support of any rows of $L$ (nor of $C$), and so they correspond to variables that do not appear in the aggregation $Lx$. 
The only not self-evident part of the statement is that $CD$ is a lumping of $A$ wrt $O$ if and only if $C$ is a lumping of $D A D^{-1}$ wrt $O D^{-1}$. It  follows from the definition of lumping and from the fact that $(C D)^+ = D^{-1} C^+$.
\end{proof}

\begin{proof}[Proof sketch of Proposition~\ref{generalized:UCRN}]
The direction from 1) to 3) follows by repeating the argumentation of~\cite[Proposition 4]{uctmcTAC}. Specifically, in the special case where $u(\cdot) = 0$, the argumentation carries over verbatim by noting that transient state probabilities $\pi_i$ can be replaced with non-negative state variables $x_i$, while lower and upper bounds can be given by Metzler rather than transition rate matrices. The general case where $u(\cdot) \in U$, instead, is then reduced to foregoing special case as follows. First, we pick an arbitrary $A(\cdot) \in [\lb; \ub]$ and $u(\cdot) \in U$. Using the solution $x^{(A,u)}(\cdot)$ and $A(\cdot)$, we construct $\B(\cdot) \in [\lbb; \ubb]$ as in the case of $u(\cdot) = 0$. Then, the proof of the special case ensures that $C x^{(A,u)} = y^{(R,u)}$. Conversely, for any $R(\cdot) \in [\lbb; \ubb]$ and $u(\cdot) \in U$, we construct $A(\cdot)$ from $R(\cdot)$ and $y^{(R,u)}(\cdot)$ as in the special case. Then, the proof ensures that $C x^{(A,u)} = y^{(R,u)}$. The direction 3) to 2) follows readily because $\Psi$ and $\Phi$ are $C$-invariant by assumption. This leaves us with the direction from $2)$ to $1)$. To this end, let us fix some $1 \leq r \leq k$ and consider the running cost $\Psi = 0$ and final cost $\Phi(x,u) = \sum_{j \in H_r} x_j + \| u \|_2$. Obviously, $\Phi$ and $\Psi$ are $C$-invariant.
Hence, $2)$ ensures that $V^{\inf} (x[0], \tau) = V^{\inf} (x[0]', \tau)$ and $V^{\sup} (x[0], \tau) = V^{\sup} (x[0]', \tau)$ where $x[0] = e_i$ and $x[0]' = e_{i'}$ for some $i,i' \in H_{r'}$ with $r \neq r'$ and $i \neq i'$, while $e_j$ denotes the $j$-th unit vector of $\RE^n$.
Similarly to~\cite[Proposition 5]{uctmcTAC}, we next observe that $x[0] = e_i$ yields
\[
\sum_{j \in H_r} x_j(t) = \Big[ \sum_{j \in H_r} A_{j,i}(0) + \sum_{j \in H_r} \sum_{\nu = 1}^m B_{j,\nu} u_\nu(0) \Big] t + o(t)
\]
because $A(t)$ and $u(t)$ are piecewise continuous with right limits. Since $u(\cdot) \geq 0$ and $B \in \REp^{n \times m}$, the infimum is attained for $u(0) = 0$ and $A(0) = \lb$, that is
\[
V^{\inf} (x[0], \tau) = \tau \sum_{j \in H_r} \lb_{j,i} + o(\tau).
\]
Since a similar reasoning applies to $V^{\inf} (x[0]', \tau)$, letting $\tau \to 0$ yields $\sum_{j \in H_r} \lb_{j,i} = \sum_{j \in H_r} \lb_{j,i'}$. Since the choice of $r \neq r'$, $H_r$ and $i,i' \in H_{r'}$ were arbitrary, this implies that $C$ is a forward equivalence~\cite{PNAScttv}, hence a lumping of $\lb$. The proof that $C$ is a lumping of $\ub$ proceeds in a similar fashion by considering the final cost $\Phi(x,u) = \sum_{j \in H_r} x_j - \eta \| u \|_2$. Here, $\eta \geq 0$ can be  chosen sufficiently large to ensure that $\sum_{j \in H_r} \sum_{\nu = 1}^m B_{j,\nu} u_\nu(0) \leq \eta \| u(0) \|_2$
because $U$ is bounded. With this, it follows that $V^{\sup} (x[0], \tau) = \tau \sum_{j \in H_r} \ub_{j,i} + o(\tau)$ and $V^{\sup} (x[0]', \tau) = \tau \sum_{j \in H_r} \ub_{j,i'} + o(\tau)$, allowing us to conclude that $\sum_{j \in H_r} \ub_{j,i} = \sum_{j \in H_r} \ub_{j,i'}$. Then again \cite{PNAScttv} yields that $C$ is a lumping of $\ub$.
%
%
%
\end{proof}

To increase readability, in the following we often drop the dependence on time $t$ from the functions.
\begin{lemma}\label{tech:lemma:conjugated:drifts}
Consider the system of ODEs $\partial_t x = Ax + Bu$ in $n$ variables, and let $D$ be an invertible $n \times n$ matrix. A function $x(t)$ satisfies $\partial_t x = Ax + Bu$ if and only if $z(t) : = D x(t)$ satisfies the system $\partial_t z = D A D^{-1} z + D B u$.
\end{lemma}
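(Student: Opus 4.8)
The plan is to exploit that $D$ is a \emph{constant} invertible matrix, so that differentiation commutes with left-multiplication by $D$. Concretely, since $z(t) = D x(t)$ with $D$ independent of $t$, one has $\partial_t z(t) = D\,\partial_t x(t)$. This single observation, combined with the substitution $x = D^{-1} z$ that is legitimate because $D$ is invertible, reduces the whole statement to a routine algebraic rearrangement, and in fact the two implications are the same computation read in opposite directions.

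For the forward implication I would assume $\partial_t x = Ax + Bu$ and compute $\partial_t z = D\,\partial_t x = D(Ax + Bu) = DAx + DBu$. Rewriting $x = D^{-1}z$ turns $DAx$ into $DAD^{-1}z$, which yields $\partial_t z = DAD^{-1}z + DBu$, exactly the claimed reduced system. For the converse I would assume $z = Dx$ solves $\partial_t z = DAD^{-1}z + DBu$, substitute $z = Dx$ on the right to obtain $\partial_t z = DAD^{-1}(Dx) + DBu = D(Ax+Bu)$, observe that the left-hand side is $D\,\partial_t x$, and then left-multiply by $D^{-1}$ to cancel $D$ and recover $\partial_t x = Ax + Bu$.

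I do not expect any genuine obstacle here. The only ingredients are the linearity of the time derivative, the time-independence of $D$ (which is what allows $\partial_t (Dx) = D\,\partial_t x$), and the invertibility of $D$ (used both to express $x = D^{-1}z$ and to cancel the leading $D$ in the converse direction). The lemma is purely a change-of-variables identity, so the proof is short and the main point is simply to record carefully that $D$ being constant and invertible is exactly what makes the equivalence go through in both directions.
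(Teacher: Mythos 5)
Your proof is correct and is essentially identical to the paper's: both arguments rest on the fact that $D$ is constant (so $\partial_t(Dx) = D\,\partial_t x$) and invertible (so $D$ can be cancelled and $x = D^{-1}z$ substituted), the only cosmetic difference being that the paper writes the computation as a single chain of ``if and only if'' equivalences while you split it into two directions. No gap to report.
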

\begin{proof}
The function $z(t) = D x(t)$ satisfies $\partial_t z = D A D^{-1} z + D B u$ if and only if $\partial_t ( D x ) = D A D^{-1} D x + D B u$, if and only if $D \partial_t x = D A x + D B u$, if and only if $\partial_t x = A x + B u$.
\end{proof}

\begin{proof}[Proof of Theorem \ref{thm:coincidence:trajectories}]
We first prove the special case when $U = \{0\}$, and to increase readability we shall write $x^{(A)}$ and $J_A$ for $x^{(A,0)}$ and $J_{(A,0)}$, respectively.

By assumption, $L$ is a lumping of $\lb$ and $\ub$. Using Proposition \ref{lumping+:as:cdp}, write $L = C D$, where $D$ is a diagonal matrix with positive entries, 
and $C \in \{0,1\}^{k \times n}$ is a proper lumping of both $D \lb D^{-1}$ and $D \ub D^{-1}$.

Fix an initial condition $x[0] \in \R_{\geq 0}^n$, and let $\mathcal{S} = \{ L x^{(A)} \mid A \in [\lb;\ub] \}$. Then Lemma \ref{tech:lemma:conjugated:drifts} yields $D x^{(A)} = x^{ (D A D^{-1}) }$ for every $A \in [\lb;\ub]$, and so
\begin{align*}
\mathcal{S} & = \{ C D x^{(A)} \mid A \in [\lb;\ub] \} = \{ C x^{ (D A D^{-1}) } \mid A \in [\lb;\ub] \} \\
&= \{ C x^{ (D A D^{-1}) } \mid D A D^{-1} \in D [\lb;\ub] D^{-1} \}.
\end{align*}
As $D$ has positive entries, one can easily verify that $D \lb D^{-1} \leq D \ub D^{-1}$, and that the family of matrices $D [\lb;\ub] D^{-1}$ coincides with $[ D \lb D^{-1}; D \ub D^{-1}]$. Then
\begin{align*}
\mathcal{S} &= \{ C x^{ (D A D^{-1}) } \mid D A D^{-1} \in [ D \lb D^{-1}; D \ub D^{-1}] \} \\
&= \{ C x^{ (M) } \mid M \in [ D \lb D^{-1}; D \ub D^{-1}] \}.
\end{align*}
As $C$ is a proper lumping of the bounds in the above set, we can apply Proposition~\ref{generalized:UCRN} to obtain that
\begin{align*}
\mathcal{S} & = \{ y^{ ( \B ) } \mid \B \in [ C ( D \lb D^{-1} ) C^+; C ( D \ub D^{-1} ) C^+ ] \} \\
& = \{ y^{ ( \B ) } \mid \B \in [ L \lb L^+; L \ub L^+ ] \},
\end{align*}
where in the last step we used the fact that $L^+ = (C D)^+ = D^{-1} C^+$. This proved the first part of the statement.

Now from it we deduce that $L$ preserves the costs of the PCS, so fix an initial condition $x[0]\in \R^n_{\geq 0}$, a time $\tau \in \R$, and $\fin, \run$ final and running costs that are $L$-invariant. As here we are assuming $U = \{0\}$, we drop the dependence of $\fin, \run$ on $u\in U$.
For every $A\in [\lb;\ub]$, let $\B \in [ L \lb L^+; L \ub L^+ ]$ be the matrix provided by the first part of the proof, 
and satisfying $L x^{(A)} = y^{ (\B) }$.
Then
$
\fin \big( x^{(A)} (\tau) \big) = \hfin\big( L{ x^{(A)} (\tau) } \big) = \hfin\big( y^{ (\B) } (\tau) \big),
$
and $\run\big(t, x^{(A)} (t) \big) = \hrun \big( t, L{x^{(A)} (t) } \big) = \hrun \big( t, y^{ (\B) } (t) \big)$ for every $t \geq 0$, giving $J_{A}(x[0], \tau) = J_{ \B } (Lx[0], \tau)$.
The same argument as above shows that for every $\B \in [ L \lb L^+; L \ub L^+ ]$ there is $A\in [\lb;\ub]$ such that
$J_{A}(x[0], \tau) = J_{ \B } (Lx[0], \tau)$, so the final coincidence of the optimal values 
is then obvious. 

We now deal with the general case, so we drop the assumption of having 
$U = \{ 0 \}$. 
Pick arbitrary $A(\cdot) \in [\lb; \ub]$ and $u(\cdot) \in U$. Using the solution $x^{(A,u)}(\cdot)$ and $A(\cdot)$, we construct $\B(\cdot) \in [\lbb; \ubb]$ as in the case of $U = \{0\}$. Then, the proof of the special case ensures that $L x^{(A,u)} = y^{(R,u)}$. Conversely, for any $R(\cdot) \in [\lbb; \ubb]$ and $u(\cdot) \in U$, we construct $A(\cdot)$ from $R(\cdot)$ and $y^{(R,u)}(\cdot)$, as in the shown special case. Then, the proof ensures that $L x^{(A,u)} = y^{(R,u)}$. From this, the coincidence of the optimal values follows immediately.
%
%
\end{proof}

\begin{proof}[Proof of Proposition~\ref{prop:reconstruction:general}]
As $L^+ = (C D)^+ = D^{-1} C^+$, we have $\B(\cdot) \in [ L \lb L^+; L \ub L^+ ] = [ C \underline{M} C^+; C \overline{M} C^+ ]$. As $C$ is a lumping of both $\underline{M}$ and $\overline{M}$, we can apply Proposition \ref{generalized:UCRN}, and let $\tilde{A}(\cdot) \in [ \underline{M}; \overline{M} ]$ be the matrix defined in item 3ii), thus satisfying $C x^{( \tilde{A},u )} = y^{(\B,u)}$.
It is easy to check that for every $i,j$ we have $A_{i,j}(\cdot) = \lambda_{i}^{-1} \lambda_{j} \tilde{A}_{i,j}(\cdot) = ( D^{-1} \tilde{A}(\cdot) D )_{i,j}$, i.e. $A(\cdot)$ in our statement satisfies $A(\cdot) = D^{-1} \tilde{A}(\cdot) D$.
By construction, $\tilde{A}(\cdot) \in [ \underline{M}; \overline{M} ] = [D \lb D^{-1}; D \ub D^{-1}] = D [\lb; \ub] D^{-1}$, so $A(\cdot) = D^{-1} \tilde{A}(\cdot) D \in [\lb; \ub]$. The final part is easy to check, and Lemma \ref{tech:lemma:conjugated:drifts} yields $L x^{(A,u)} = C D x^{(A,u)} = C x^{( D A D^{-1},u )} = C x^{( \tilde{A},u )} = y^{(\B,u)}$.
\end{proof}

\begin{proof}[Proof of Theorem \ref{thm:coincidence:cost:functionals}]
We have to prove that $L$ is a proper lumping of both $\lb$ and $\ub$. Define $C$ and $D$ as in Proposition \ref{lumping+:as:cdp}, so that $L = CD$ and $\rowspan(O D^{-1}) \subseteq \rowspan (C)$; now it is sufficient to prove that $C$ is a proper lumping of both $D \lb D^{-1}$ and $D \ub D^{-1}$. In view of Proposition~\ref{generalized:UCRN}, we check that $C$ preserves the costs of the PCS $([ D \lb D^{-1} ; D \ub D^{-1}], O D^{-1}, D B, U)$. This means checking that the latter PCS and its reduction by $C$, that is
$([ L \lb L^+; L \ub L^+ ], O L^+, L B, U)$,
have the same optimal values (to write the above reduced PCS we used that $L^+ = D^{-1} C^+$). 
To this end, fix a time $\tau \in \R$, an arbitrary $z[0]\in \R^n_{\geq 0}$, and arbitrary final and running costs $\phi$, $\psi$ that are $C$-invariant. First, consider the new final cost function $\fin \colon \R^n \times \R^m \to \R$ defined by $\fin ( x, u) = \phi ( D x, u) $ for $x \in \R^n$ and $u \in \R^m$. It is easy to check that $\fin$ is $CD$-invariant.
Similarly, we denote by $\run$ the new running cost function $\R \times \R^n \times \R^m\to \R$ defined by $\run(t,x,u) = \psi(t, Dx,u)$, and it holds that also $\run$ is $L$-invariant; 
moreover, it holds that
\begin{equation}\label{hat:costs:circ:D}
\hat{\phi} = \hfin \text{ and } \hat{\psi} = \hrun.
\end{equation}

Fix $M \in [ D \lb D^{-1} ; D \ub D^{-1}]$ and $u \in U$. Let $A \in [ \lb; \ub]$ be such that $M = D A D^{-1}$.
Using the initial condition $x[0] = D^{-1} z[0] \in \R^n_{\geq 0}$, time $\tau$, and the costs $\fin$, $\run$, apply the assumption on $L$ to preserve the costs. This way, we get $\B \in [ L \lb L^+; L \ub L^+]$ and $v \in U$ such that
\begin{equation}\label{eq:dom2}
J_{(A,u)} (x[0], \tau) = J_{(\B,v)} (Lx[0], \tau).
\end{equation}

Our goal now is to prove that
\begin{equation}\label{eq:dom}
J_{(M,u)} (z[0], \tau) = J_{(\B,v)} (Cz[0], \tau);
\end{equation}
to deduce \eqref{eq:dom} from \eqref{eq:dom2}, we show that both their left-hand sides and right-hand sides coincide. We begin by studying their left-hand sides. Let us denote by $z$ the solution of $\partial_t z = M z + D B u$, with initial condition $z[0]$ (and along which the functional $J_{(M,u)} (z[0], \tau)$ is computed, by definition).
By Lemma \ref{tech:lemma:conjugated:drifts}, we have $z = D x^{(A,u)}$, and then 
$\fin ( x^{(A,u)}(\tau), u(\tau) ) = \phi ( D x^{(A,u)}(\tau), u(\tau) ) = \phi ( z(\tau), u(\tau) )$. Similarly, $\run (t, x^{(A,u)}(t), u(t) ) = \psi (t, z(t), u(t) )$ for every $t \geq 0$, giving $J_{(A,u)} (x[0], \tau) = J_{(M,u)}(z[0], \tau)$.
Now we compare the functionals in the right-hand sides of \eqref{eq:dom2} and \eqref{eq:dom}.
First, both of these functionals are computed along the same trajectory $y^{(\B,v)}$, solution of the problem $\partial_t y = \B y + L B u$, and with the same initial condition $Lx[0] = CDx[0] = Cz[0]$. The evaluation of $J_{(\B,v)} (Lx[0], \tau)$ involves the reduced costs $\hfin$ and $\hrun$, while $J_{(\B,v)} (Cz[0], \tau)$ involves the reduced costs $\hat{\phi}$ and $\hat{\psi}$, so we conclude by applying \eqref{hat:costs:circ:D}. With a similar argument, one can see that also for every $z[0]\in \R^n_{\geq 0}$, and $\B \in [ L \lb L^+; L \ub L^+]$, and $v \in U$, there are $M \in [ D \lb D^{-1} ; D \ub D^{-1}]$ and $u \in U$ such that \eqref{eq:dom} holds for all costs that are $C$-invariant, so to finally establish that
$C$ preserves the costs of the PCS $([ D \lb D^{-1} ; D \ub D^{-1}], O D^{-1}, D B, U)$.
\end{proof}

\subsection{Proof of Theorem~\ref{Clue:for:m:and:M}}\label{sec:proof:existence:RWE}

To prove Theorem~\ref{Clue:for:m:and:M} 
we need some technical results first. 
For $L, \rwlL\in\R^{k \times n}$ , the property $\rowspan(L) = \rowspan(\rwlL)$ 
means that the rows of $L$ and those of $\rwlL$ span the same vector space, i.e., there is an invertible matrix $S \in \R^{k\times k}$ such that $L = S\rwlL$.

Let us denote by $e_{\w}$ the $\w$-th unit column vector in $\R^k$, having all $0$ entries but the $\w$-th one, which is $1$.

\begin{lemma}[Disjoint supports]\label{new:supp(ra):technical}
    Let $L,\rwlL \in \R^{k\times n}$ with rank $k$ be such that $L = S\rwlL$, for an invertible matrix $S$. 
    If the rows of $\rwlL$ have pairwise disjoint supports, then the columns of
    $L$ are multiples of the columns of $S$.
\end{lemma}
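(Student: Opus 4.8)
The plan is to exploit the identity $L = S\rwlL$ one column at a time, after translating the disjoint-support hypothesis on the \emph{rows} of $\rwlL$ into a statement about its \emph{columns}. First I would fix notation: let $w_i \in \R^k$ denote the $i$-th column of $\rwlL$ for $1 \leq i \leq n$, and let $s_r \in \R^k$ denote the $r$-th column of $S$ for $1 \leq r \leq k$. By the definition of matrix multiplication, the $i$-th column of $L = S\rwlL$ is then exactly $S w_i$, so the whole claim reduces to understanding the vectors $w_i$.

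The key observation is that pairwise disjoint row supports force each column $w_i$ of $\rwlL$ to have \emph{at most one} nonzero entry. Indeed, if two distinct rows $r \neq r'$ both had a nonzero entry in column $i$, then $i$ would lie in $\supp(\text{row }r) \cap \supp(\text{row }r')$, contradicting disjointness. Hence for each $i$ either $w_i = 0$, or there is a unique row index $r$ together with a scalar $c_i \neq 0$ such that $w_i = c_i\, e_r$, where $e_r$ is the $r$-th unit column vector of $\R^k$.

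It then only remains to substitute. If $w_i = 0$, the $i$-th column of $L$ equals $S\cdot 0 = 0$, which is trivially the zero multiple of any column of $S$. Otherwise $w_i = c_i e_r$ yields the $i$-th column $S w_i = c_i\, S e_r = c_i\, s_r$, a scalar multiple of the $r$-th column of $S$. In both cases the $i$-th column of $L$ is a multiple of a column of $S$, which is the desired conclusion. (Note that rank $k$ and invertibility of $S$ are not strictly needed for this direction; they merely guarantee the setup is nondegenerate.)

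I do not expect a genuine obstacle here: the argument is essentially a direct computation. The one step that actually uses the hypothesis — and hence the only place requiring care — is the passage from ``the rows of $\rwlL$ have pairwise disjoint supports'' to ``each column of $\rwlL$ has at most one nonzero entry,'' since this transposition of the support condition is exactly what makes $w_i$ a scaled standard basis vector and thus sends $w_i$ to a single column of $S$ under multiplication by $S$.
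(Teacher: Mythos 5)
Your proof is correct and follows the same route as the paper's own argument: translate disjoint row supports of $\rwlL$ into each column of $\rwlL$ being a scalar multiple of a standard basis vector, then compute column-by-column that $L_i = S\rwlL_i$ is a multiple of a column of $S$. Your side remark that rank and invertibility are not needed for this implication is also accurate.
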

\begin{proof}
    Since $L = S\rwlL$, we have that the $i$-th column $L_i$ of $L$ can be written as $L_i = S\rwlL_i$. Since the rows of $\rwlL$ have disjoint
    supports, there is at most one $\w$ in $\{1,\ldots, k\}$ such that $\rwlL_{\w,i} \neq 0$. If there is no such $\w$, then $0 = \rwlL_i = L_i$ and we are done. Otherwise, $\rwlL_i = \rwlL_{\w,i} e_\w$, and so $L_i = S \rwlL_{\w,i} e_{\w} = \rwlL_{\w,i} S_{\w}$.
\end{proof}

This result is enough to characterize when, given a lumping $L \in \R^{k\times n}$, there is another lumping $\rwlL \in \R^{k\times n}$ with $\rowspan(\rwlL) = \rowspan(L)$ whose rows have disjoint supports.

\begin{definition}[Equivalence over columns]\label{def:tilde:main}
    Let $L\in \R^{k\times n}$ have rank $k\leq n$. Let $\I = \{1,2,\ldots,n\} \setminus H_0 = \{ 1 \leq i \leq n \mid L_i \neq 0 \}$ (i.e., the set of indices of the columns that are
    not identically zero) and write $i \sim j$ for any $i,j \in \I$ whenever the columns $L_i$, $L_j$ are multiples of each other.
\end{definition}

\begin{proposition}\label{thm:existence:lump+}
    Let $L$, $\I$ be as in Definition~\ref{def:tilde:main}. Then ${|\I/\!\!\sim\!| = k}$ if and only if there is $\rwlL\in \R^{k\times n}$ with ${\rowspan(\rwlL) = \rowspan(L)}$ whose rows have pairwise disjoint
    supports.
\end{proposition}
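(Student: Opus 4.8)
The plan is to translate the combinatorial condition $|\I/\!\!\sim| = k$ into a statement about the column space of $L$, and then to exploit Lemma~\ref{new:supp(ra):technical} together with the characterization (stated just before that lemma) that $\rowspan(L) = \rowspan(\rwlL)$ holds iff $L = S\rwlL$ for some invertible $S \in \R^{k\times k}$. The guiding observation is that each $\sim$-class consists of columns all pointing in one direction, so the column space of $L$ is spanned by a single representative per class; hence $k = \dim \rowspan(L) = \dim(\text{column space of } L) \leq |\I/\!\!\sim|$ always holds, and the hypothesis $|\I/\!\!\sim| = k$ is precisely the statement that picking one column per class yields a linearly independent family, i.e.\ a basis of the column space.

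For the direction ($\Leftarrow$), I would suppose such a $\rwlL$ exists. Since $\rowspan(\rwlL) = \rowspan(L)$ has dimension $k$, I can write $L = S\rwlL$ with $S$ invertible, and Lemma~\ref{new:supp(ra):technical} then gives that each column $L_i$ is a scalar multiple of some column $S_{\w}$ of $S$; for $i \in \I$ this multiple is nonzero, so there is a unique index $\sigma(i) \in \{1,\dots,k\}$ with $L_i$ a nonzero multiple of $S_{\sigma(i)}$. Because $S$ is invertible its columns are linearly independent, so $L_i$ and $L_j$ are multiples of each other exactly when $\sigma(i) = \sigma(j)$; thus the $\sim$-classes are the nonempty fibers of $\sigma$. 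The column space of $L$ equals $\operatorname{span}\{\,S_{\w} : \w \in \operatorname{im}(\sigma)\,\}$, of dimension $|\operatorname{im}(\sigma)|$; since this dimension is $k$, the map $\sigma$ is onto and there are exactly $|\operatorname{im}(\sigma)| = k$ classes.

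For the direction ($\Rightarrow$), I would assume $|\I/\!\!\sim| = k$ and let $P_1,\dots,P_k$ be the classes. Choosing a representative column $v_{\w} = L_{i_{\w}}$ with $i_{\w} \in P_{\w}$, the family $v_1,\dots,v_k$ spans the $k$-dimensional column space and so forms a basis, whence $V = [\,v_1 \mid \cdots \mid v_k\,]$ is invertible. I would set $S = V^{-1}$ and define $\rwlL = S L$; since $S$ is invertible, $\rowspan(\rwlL) = \rowspan(L)$. For $i \in P_{\w}$ we have $L_i = c_i v_{\w}$ for a nonzero scalar $c_i$, so the $i$-th column of $\rwlL$ is $S L_i = c_i S v_{\w} = c_i e_{\w}$, supported only on row $\w$, while for $i \in H_0$ the zero column $L_i = 0$ gives a zero column of $\rwlL$. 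Consequently row $\w$ of $\rwlL$ is supported exactly on $P_{\w}$, the supports $P_1,\dots,P_k$ are pairwise disjoint (and nonempty), and $\rwlL$ has the desired form.

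The hard part will be the bookkeeping that links the number of $\sim$-classes to the rank of $L$: one must argue carefully that collinearity within a class makes the class representatives span the column space, so that $|\I/\!\!\sim| \geq k$ in general, with equality equivalent to linear independence of the representatives. Once this correspondence is established, the ($\Leftarrow$) direction is an immediate application of Lemma~\ref{new:supp(ra):technical}, and the ($\Rightarrow$) direction reduces to the explicit change of basis $S = V^{-1}$, whose only remaining check is the routine verification that each column of $SL$ lands on a single coordinate axis.
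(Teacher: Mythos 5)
Your proof is correct and takes essentially the same route as the paper: both directions rest on Lemma~\ref{new:supp(ra):technical} together with the characterization that $\rowspan(\rwlL)=\rowspan(L)$ means $L = S\rwlL$ for invertible $S$, and your matrix $V$ of class representatives is exactly the paper's $S$, with $\rwlL = V^{-1}L$ coinciding with the paper's ratio-based construction of $\rwlL$. The only differences are bookkeeping: you count classes via the column space dimension and the map $\sigma$ (the paper instead uses that every row of $\rwlL$ is nonzero), and you verify invertibility of $V$ before concluding the row spans agree, a point the paper leaves implicit.
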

\begin{proof}
    Assume such $\rwlL$ exists. Since $\rowspan(\rwlL) = \rowspan(L)$, there is an invertible $S$ with $L = S\rwlL$. By Lemma~\ref{new:supp(ra):technical},
    the columns of $L$ are multiples of the columns of $S$. Hence, $|\I/\!\!\sim\!| \leq k$. On the other hand, since $\rwlL$ has rank $k$, all its rows have at least
    one non-zero element. Thus, for every column of $S$ there is a column of $L$ that is a multiple of its, meaning that $|\I/\!\!\sim\!| \geq k$.     Now, assume that $|\I/\!\!\sim\!| = k$ and let $\calH = \{H_1,\ldots, H_k\}$ be the partition of $\I$ obtained by $\sim$. For each $\w = 1,\ldots, k$,
    we fix one representative $i(\w) \in H_{\w}$. For $i,j \in H \in \calH$, we denote by $q(i,j)$ the ratio between the $i$-th and $j$-th column of $L$ (note that neither $L_i$ nor $L_j$ is the zero column). 
    Set $\rwlL$ as follows: $\rwlL_{\w,j} = q_{i(\w), j} $ if $j \in H_{\w}$, and zero otherwise. By construction, the rows of $\rwlL$ have disjoint supports (since the support of the $\w$-th row of $\rwlL$ is $H_{\w}$). Moreover, if we
    define $S = (L_{i(1)} | \ldots | L_{i(k)})^T$ we have $L = S\rwlL$, so ${\rowspan(L) = \rowspan(\rwlL)}$.
\end{proof}


\begin{proof}[Proof of Theorem \ref{Clue:for:m:and:M}]
    Assume that there is a constrained proper lumping $\rwlL \in \RE_{\geq 0}^{k \times n}$ of the PCS such that 
    $\rowspan(\rwlL)$ has dimension $k$.
    By construction, $L$ from Proposition \ref{prop:alg:clue} has the property that $\rowspan(L)$ has also dimension $k$,
    and it is minimal satisfying those conditions, so we have $\rowspan(L) = \rowspan(\rwlL)$. To prove that $L$ is a constrained proper lumping, we first show that its rows have disjoint supports. Consider $\I$ as in Definition~\ref{def:tilde:main} for
    the lumping $L$. By Proposition~\ref{thm:existence:lump+} we get $|\I/\!\!\sim\!| = k$. By Proposition~\ref{prop:alg:clue},
    $L$ is in reduced echelon-form and so the $k$ vectors in $\{e_{\w} \mid 1 \leq \w \leq k \}$ are columns of $L$. We can take them as representatives
    of the equivalence classes of~$\sim$.
    Then for every non-zero column $L_i$ of $L$, there is $\w \in \{1,\ldots,k\}$ with $L_i \sim e_{\w}$, and in particular $L_i$ only
    has a non-zero entry in the $\w$-th row. Thus, the rows of $L$ have pairwise disjoint supports. Next, we show that $L \in \RE_{\geq 0}^{k \times n}$. To this end, assume by contradiction that $L$ has a negative entry, say $L_{\w,i} < 0$.
    From $\rowspan(L) = \rowspan(\rwlL)$, there is an invertible matrix $S$ with $\rwlL = SL$. Since the rows of $L$ have pairwise disjoint supports,
    we can apply Lemma~\ref{new:supp(ra):technical} in this setting. As the column $L_i$ is not zero,
    the proof of Lemma~\ref{new:supp(ra):technical} shows that $\rwlL_i = L_{\w,i} S_j$.
On the other hand, $L$ is in reduced echelon-form so it has a column, say $L_j$, such that $L_j = e_{\w}$. Then $\rwlL = SL$ gives $\rwlL_j = S L_j = S e_{\w} = S_{\w}$.
    So we have deduced that $\rwlL$ has as columns the vectors $S_{\w}$ and $L_{\w,i} S_j$.
    Since $\rwlL$ is a proper lumping we obtain that $S_{\w} = 0$, but this contradicts the hypothesis of $S$ being invertible.
\end{proof}

\end{document}